\nonstopmode
\documentclass[10pt, reqno]{amsart}
\usepackage{graphicx}
\usepackage{latexsym}
\usepackage{fancyhdr}
\usepackage{amsmath, amssymb, amsthm}
\usepackage[all]{xy}
\usepackage{pdflscape}
\usepackage{longtable}
\usepackage{rotating}
\usepackage{verbatim}
\usepackage{hyperref}
\hypersetup{
    linkcolor = blue,
    citecolor = blue,
    urlcolor  = blue,
    colorlinks = true,
}

\usepackage{etoc}
\etocsettocstyle
  {\medskip}          % no heading
  {\bigskip}

\etocsetstyle{section}                                          % start (before first entry)
  {}
  {\leavevmode\leftskip 0pt\relax}            % prefix (before each entry)
  {\etocnumber \etocname          % contents
   \dotfill\etocpage\par\smallskip}
  {}                                          % finish (after last entry)

\etocsetstyle{subsection}
  {}
  {\leavevmode\leftskip 1.5em\relax}
  {\etocnumber\ \etocname\dotfill\etocpage\par\smallskip}
  {}

\etocsettocstyle
  {\section*{\contentsname}\small}           % TOC heading
  {\bigskip}                                 % after TOC

\usepackage{cleveref}
\usepackage{subfigure}
\usepackage{mathrsfs}
\usepackage{mdwlist}
\usepackage{dsfont}

\usepackage{mathtools}
\mathtoolsset{showonlyrefs=true}

\usepackage{float}
\usepackage{color}
\usepackage{stmaryrd}
\usepackage{parskip}
\usepackage{orcidlink}

\usepackage{tkz-base}
\usepackage{pgfplots}
\pgfplotsset{compat=newest}

\usepackage{tkz-euclide}
\usepackage{etoolbox}

\definecolor{teal}{rgb}{0.0, 0.5, 0.5}

\newcounter{mnotecount}[section]

\newcommand{\rmnote}[1]{}%{\mnote{#1}}

\allowdisplaybreaks

\DeclareFontFamily{U}{mathb}{\hyphenchar\font45}
\DeclareFontShape{U}{mathb}{m}{n}{
      <5> <6> <7> <8> <9> <10> gen * mathb
      <10.95> mathb10 <12> <14.4> <17.28> <20.74> <24.88> mathb12
      }{}
\DeclareSymbolFont{mathb}{U}{mathb}{m}{n}
\DeclareFontSubstitution{U}{mathb}{m}{n}

\theoremstyle{plain}
\newtheorem*{theorem*}{Theorem}
\newtheorem{theorem}{Theorem}[section]
\newtheorem*{lemma*}{Lemma}
\newtheorem{lemma}[theorem]{Lemma}
\newtheorem*{assumption*}{Assumption}

\newtheorem*{proposition*}{Proposition}
\newtheorem{proposition}[theorem]{Proposition}
\newtheorem*{corollary*}{Corollary}

\newtheorem*{claim*}{Claim}

\newtheorem*{conjecture*}{Conjecture}

\newtheorem*{question*}{Question}

\newtheorem*{result*}{Result}

\theoremstyle{definition}
\newtheorem*{definition*}{Definition}

\newtheorem*{example*}{Example}

\newtheorem*{algorithm*}{Algorithm}
\newtheorem*{remark*}{Remark}
\newtheorem*{remarks*}{Remarks}

\newtheorem*{convention*}{Convention}

\Crefname{l}{Lemma}{Lemmas}    %declares the type of the environment for cleverref, use \label[<type>]{<label>}
\Crefname{p}{Proposition}{Propositions}
\Crefname{t}{Theorem}{Theorems}
\Crefname{c}{Corollary}{Corollaries}
\Crefname{r}{Remark}{Remarks}
\Crefname{d}{Definition}{Definitions}
\Crefname{e}{Example}{Examples}
\Crefname{q}{Question}{Questions}

\numberwithin{equation}{section}

\def\al{\alpha}

\def\de{\delta}
\def\ep{\epsilon}

\def\la{\lambda}

\def\rh{\rho}

\def\si{\sigma}

\def\ta{\tau}

\def\De{\Delta}

\def\La{\Lambda}

\def\C{\mathbb{C}}

\def\N{\mathbb{N}}

\def\R{\mathbb{R}}

\def\cA{\mathcal{A}}

\def\cL{\mathcal{L}}

\def\sol{\mathcal{S}}

\def\loc{\on{loc}}

\def\<{\langle}
\def\>{\rangle}
\renewcommand{\o}{\circ}

\def\ol{\overline}

\def\Lip{\on{Lip}}

\def\Hyp{\on{Hyp}}

\def\dd{\mathbf{d}}

\def\a#1{\left\llbracket{#1}\right\rrbracket}

\let\on=\operatorname

\newcommand{\sr}[1]%
{\ifmmode{}^\dagger\else${}^\dagger$\fi\ifvmode
\vbox to 0pt{\vss
 \hbox to 0pt{\hskip\hsize\hskip1em
 \vbox{\hsize3cm\raggedright\pretolerance10000
 \noindent #1\hfill}\hss}\vss}\else
 \vadjust{\vbox to0pt{\vss%
 \hbox to 0pt{\hskip\hsize\hskip1em%
 \vbox{\hsize3cm\raggedright\pretolerance10000%
 \noindent \small #1\hfill}\hss}\vss}}\fi%
}

\providecommand{\mapsfrom}{\kern.2em%
\setbox0=\hbox{$\leftarrow$\kern-.10em\rule[0.26mm]{0.1mm}{1.3mm}}\box0%
\kern.3em}

\title[Reparameterization and continuity of the solution map]
{Natural reparameterization and continuity of the solution map for polynomials}

\author[Adam Parusi\'nski and  Armin Rainer]
{Adam Parusi\'nski \orcidlink{0000-0002-8203-2561} and Armin Rainer \orcidlink{0000-0003-3825-3313}}

\address {Adam Parusi\'nski: Universit\'e C\^ote d'Azur,  CNRS,  LJAD, UMR 7351, 06108 Nice, France}
\email{adam.parusinski@univ-cotedazur.fr}

\address{Armin Rainer: Faculty of Mathematics and Geoinformation,
    Institute for Statistics and Mathematical Methods in Economics, E105-04,
TU Wien, Wiedner Hauptstraße 8, 1040 Vienna, Austria}
\email{armin.rainer@tuwien.ac.at}

\begin{document}

\begin{abstract}
    The optimal Sobolev regularity of the roots of a smooth curve of monic complex polynomials is $W^{1,q}$, 
    for $q \in [1,\frac{d}{d-1})$, where $d$ is the degree. This result is stable in the sense that the solution map 
    from $C^d$ coefficients to $W^{1,q}$ roots is continuous. We prove that, after a natural 
    Lipschitz reparameterization, the roots are Lipschitz and 
    the map from $C^d$ coefficients to reparameterization and reparameterized   
    roots is continuous with respect to the $W^{1,q}$ topology on the target spaces, for all $q \in [1,\infty)$. 
    The result is based on a convergence and reparameterization theorem for Almgren's $Q$-valued Sobolev functions.
\end{abstract}

\thanks{This research was funded in whole or 
    in part by the Austrian Science Fund (FWF) DOI 10.55776/PAT1381823.
For open access purposes, the authors have applied a CC BY public copyright license to any author-accepted manuscript version arising from this submission.}
\keywords{Lipschitz reparameterization, roots of complex polynomials, perturbation theory, continuity of the solution map, root stability, Almgren's $Q$-valued Sobolev functions}
\subjclass[2020]{
    26C05,   %Real polynomials: analytic properties, etc. [See also 12Dxx, 12Exx]
    26C10,   %Real polynomials: location of zeros 
%    26A16,   %Lipschitz (Hölder) classes
    26A46,   %Absolutely continuous real functions in one variable 
    30C15,   %Zeros of polynomials, rational functions, and other analytic functions of one complex variable (e.g., zeros of functions with bounded Dirichlet integral)
    46E35,   %Sobolev spaces and other spaces of "smooth'' functions, embedding theorems, trace theorems
%    47A55,   %Perturbation theory of linear operators
47H30}   %Particular nonlinear operators (superposition, Hammerstein, Nemytskiĭ, Uryson, etc.)
%    14P10,      %Semialgebraic sets and related spaces
%	26B05, 	    %Continuity and differentiation questions
%	26B35,  	%Special properties of functions of several variables, HÃ¶lder conditions, etc.
%	26E10,  	%$C^\infty$-functions, quasi-analytic functions
%    26E25,      %Set-valued functions 
%    32B20,  	%Semi-analytic sets and subanalytic sets
%46E15}      %Banach spaces of continuous, differentiable or analytic functions
%  58C20,    %Differentiation theory (Gateaux, FrÃ©chet, etc.) on manifolds
%	58C25}  	%Differentiable maps
	%58B10,  	%Differentiability questions
	%58B25,  	%Group structures and generalizations on infinite-dimensional manifolds
	%58C07,  	%Continuity properties of mappings
	%58D05} 	    %Groups of diffeomorphisms and homeomorphisms as manifolds
%\dedicatory{dedicatory}
\date{September 30, 2026}

\maketitle

\setcounter{tocdepth}{1}
%\tableofcontents

%-----------------------------------------------------------------------------------------------------------------------
\section{Introduction}
%-----------------------------------------------------------------------------------------------------------------------

The optimal Sobolev regularity of the roots of monic complex polynomials with smooth coefficients was 
established in \cite{ParusinskiRainerAC,ParusinskiRainer15,Parusinski:2020aa}. If the coefficients are functions of class 
$C^{d-1,1}(\ol I)$, where $d$ is the degree of the polynomial and $I$ a bounded open interval, then each continuous root 
is absolutely continuous, bounded, and $\la' \in L^q(I)$, for all $q \in [1,\frac{d}{d-1})$. 
In \cite{Parusinski:2024ab}, 
stability of this result was proved, in the sense that the solution map which assigns to polynomials with coefficients 
in $C^d(\ol I)$ their unordered root tuple of Sobolev class $W^{1,q}$, for $q \in [1,\frac{d}{d-1})$, 
is continuous. 
In this note, we show that, by means of a natural Lipschitz reparameterization, these results extend to all values 
$q \in [1,\infty)$.

Since there is no canonical ordering of the complex roots, 
we work with the space of unordered $d$-tuples and
use Almgren's theory of multivalued Sobolev functions 
(see \cite{Almgren00}, \cite{De-LellisSpadaro11}). 
We view the symmetric product $\on{Sym}^d(\C) = \C^d/\on{S}_d$ as the 
complete metric space $(\cA_d(\C), \dd_2)$ of unordered $d$-tuples $[z]=[z_1,\ldots z_d]$ of complex numbers, 
where
\begin{equation}
    \mathbf d_2([z],[w]) := \min_{\si \in \on{S}_d} \|z - \si w\|_2 
    = \min_{\si \in \on{S}_d} \Big(\sum_{j=1}^d |z_j - w_{\si(j)}|^2 \Big)^{1/2}.
\end{equation}
There is a bi-Lipschitz embedding $\De : \cA_d(\C) \to \R^N$, for some $N = N(d)$, which can be used to define 
the Sobolev space
\begin{equation}
    W^{1,q}(I,\cA_d(\C)) := \{f : I \to \cA_d(\C) : \De \o f \in W^{1,q}(I,\R^N)\}, \quad q \in [1,\infty). 
\end{equation}
We call $\De$ an Almgren embedding. 
Pullback of the Sobolev norm on $W^{1,q}(I,\R^N)$ endows $W^{1,q}(I,\cA_d(\C))$ with a complete metric; 
the induced topology does not depend on the choice of $\De$. See \Cref{ssec:Q} for more details.

Each element $[\la_1,\ldots,\la_d] \in \cA_d(\C)$ corresponds in a one-to-one fashion to a 
complex polynomial 
\begin{equation} \label{eq:Pa}
    P_a(Z) = Z^d + \sum_{i=1}^d a_i Z^{d-i} =  \prod_{j=1}^d (Z-\la_j). 
\end{equation}    
Since the coefficient $a_i$ is (up to sign) the $i$-th elementary symmetric function of the roots, 
this defines a bijective map $a=(a_1,\ldots,a_d) : \cA_d(\C) \to \C^d$.
We denote by $\La : \C^d \to \cA_d(\C)$ the inverse map which sends (the coefficient vector of) a polynomial to its
unordered root tuple. The map $a$ is locally Lipschitz and $\La$ is 
locally $\frac{1}{d}$-H\"older (see \cite[Lemma 6.4]{Parusinski:2024ab}).

Let us consider curves of polynomials $P_a$, that is curves $a : I \to \C^d$, where
$I\subseteq \R$ is a bounded open interval. The unordered roots of $P_a$ are given by the composite 
$\La \o a : I \to \cA_d(\C)$.
We will call the nonlinear superposition operator $\sol : a \mapsto \La \o a$ the \emph{solution map} 
and write $\sol(a) = \La \o a = \La_a$ interchangeably.

\begin{theorem}[{\cite{ParusinskiRainer15,Parusinski:2024ab}}]\label[t]{t:know}
    Let $d \in \N_{\ge 2}$ and $q \in [1,\frac{d}{d-1})$. Then:
    \begin{enumerate}
        \item The map $\sol : C^{d-1,1}(\ol I,\C^d) \to W^{1,q}(I,\cA_d(\C))$ is well-defined and bounded.
        \item The map $\sol : C^{d}(\ol I,\C^d) \to W^{1,q}(I,\cA_d(\C))$ is continuous.
    \end{enumerate}
\end{theorem}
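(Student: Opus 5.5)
This result is imported from \cite{ParusinskiRainer15,Parusinski:2024ab}; here is how I would reconstruct its proof.

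\textbf{Part (1).} I would reduce to the main estimate of \cite{ParusinskiRainer15}: every continuous root $\la$ of $P_a$ on $I$ is absolutely continuous with
\begin{equation}
  \|\la'\|_{L^q(I)} \le C(d,q,|I|)\, \max_i \|a_i\|^{1/i}_{C^{d-1,1}(\ol I)}.
\end{equation}
That estimate is proved by induction on $d$. First apply a Tschirnhausen transformation so that $a_1=0$. Then, near a point where not all roots coincide, use the splitting lemma: $P_a=P_bP_c$ with $b,c$ analytic functions of $a$ and with lower degrees. On a suitable interval one rescales the coefficients $a_i/a_k^{i/k}$, where $k$ is chosen to maximize $|a_k|^{1/k}$. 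The intervals are controlled by the bound $|a_k'|\lesssim |a_k|^{(k-1)/k}$, which follows from a Glaeser-type inequality using $C^{d-1,1}$ regularity. One then sums over a countable Whitney-type cover. The exponent $q<\frac{d}{d-1}$ enters exactly through the integrability of $|t|^{1/d-1}$.

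Since $\La$ admits a continuous parameterization of the roots in one variable (Kato), the bound on ordered roots gives $\De\o\La\o a\in W^{1,q}$. Here I use that $\De$ is Lipschitz and that $|(\De\o f)'|\le C|f'|$ a.e. for any continuous selection. Boundedness follows because $\|\La_a\|_\infty\lesssim\max|a_i|^{1/i}$.

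\textbf{Part (2).} Let $a_n\to a$ in $C^d$. Uniform convergence $\La_{a_n}\to\La_a$ follows from the local $\frac1d$-Hölder continuity of $\La$. For the derivatives, the uniform bound from (1) with exponent $q'\in(q,\frac{d}{d-1})$ gives equi-integrability of $|(\De\o\La_{a_n})'|^q$. It therefore suffices to show $(\De\o\La_{a_n})'\to(\De\o\La_a)'$ in measure, and then apply Vitali's theorem.

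On the open set where the discriminant of $P_a$ is nonzero, the roots are locally $C^d$ functions of the coefficients by the implicit function theorem, so convergence holds locally uniformly. On the set $Z$ where roots collide, I would use the splitting into clusters and induct on $d$. On the interior of the set where $P_a$ has a single $d$-fold root after Tschirnhausen, one needs $\|(\La_{a_n})'\|_{L^q}\to0$ over that set. This comes from the quantitative form of the estimate in (1): the bound depends on $\max_i\|\tilde a_{n,i}\|^{1/i}$, and these tend to $0$ there. The $C^d$ rather than $C^{d-1,1}$ assumption gives a uniform modulus of continuity for $a^{(d)}$. That modulus is needed to make the Glaeser-type interval estimates uniform in $n$.

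\textbf{Main obstacle.} The hardest step is this last uniformity near the degenerate set. It requires a version of the estimate in (1) that is localized to small intervals and has constants that vanish as the local oscillation of the normalized coefficients goes to zero. I would first try to redo the inductive proof while tracking the dependence on $\sup|a_i|^{1/i}$ and on the modulus of continuity of $a^{(d)}$.
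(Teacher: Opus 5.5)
The paper gives no proof of this statement; it is quoted from \cite{ParusinskiRainer15,Parusinski:2024ab}, so your proposal has to stand on its own. Your outline of (1) follows the inductive scheme of the cited work, apart from the false inequality flagged below. Your reduction of (2) to convergence in measure of the derivatives, via a uniform $L^{q'}$ bound and Vitali, is sound. But (2) is not proved, because your case analysis misses part of the domain. You treat three sets: where the discriminant of $P_a$ is nonzero, where not all roots coincide (splitting and induction), and the \emph{interior} of the total-collision set $E:=\{x\in\ol I:\tilde a(x)=0\}$. But $E$ is only a closed set, and it can have positive measure and empty interior: take $P_a=Z^2-f$ with $f\ge 0$ smooth and $f^{-1}(0)$ a Cantor set of positive measure. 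On $E\setminus\on{int}E$ none of your arguments applies, and this is exactly the step you leave open as the ``main obstacle''.

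You do not need to redo the induction; the localized estimate you ask for follows from (1) by rescaling, once you use flatness at density points. At almost every $x_0\in E$ the $C^d$ functions $\tilde a_j$ vanish together with all derivatives up to order $d$. Hence, if $\omega$ is a nondecreasing modulus of continuity of $\tilde a^{(d)}$, then $|\tilde a_j^{(s)}|\le h^{d-s}\omega(h)$ for $s\le d$ on every interval $J\ni x_0$ of length $h\le 1$. Rescale $J$ to $[0,1]$, apply your estimate from (1) there, and put $\eta_n:=\|\tilde a_n-\tilde a\|_{C^d(\ol I)}$. This gives
\[
\int_J |(\La_{\tilde a_n})'|^q \le C(d,q)\big(h\,\omega(h)^{q/d}+h^{1-q}\eta_n^{q/d}\big).
\]
Sum over the $O(|I|/h)$ intervals of a mesh-$h$ grid that meet such points. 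This yields $\limsup_n\int_E|(\La_{\tilde a_n})'|^q\le C|I|\,\omega(h)^{q/d}$ for every small $h$, so the limit is $0$. This is exactly where $C^d$, rather than $C^{d-1,1}$, is used.

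Two further steps are wrong as written.

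In (1), the bound $|a_k'|\lesssim|a_k|^{(k-1)/k}$ is false. It fails for $a_k(x)=x$. Glaeser's inequality needs $a_k\ge0$ and gives this bound only for $k=2$; already $x^2$ violates it for $k=3$. So this bound cannot be what controls the intervals.

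In (2), you pass from convergence of the roots, or of selections, to convergence of $(\De\o\La_{a_n})'$ using only that $\De$ is Lipschitz. Composition with a Lipschitz map does not preserve convergence of derivatives. For example, let $\psi(v)=\on{dist}(v,S)$ with $S=(\R\times\{0\})\cup\bigcup_{m\ge1}(m^{-2}\mathbb Z\times\{1/m\})$. Then $\psi(x,1/n)=\on{dist}(x,n^{-2}\mathbb Z)$ has derivative $\pm1$ almost everywhere, while $\psi(x,0)\equiv0$. Use instead the characterization of $W^{1,q}$-convergence through selections from \cite{Parusinski:2024ab}. Together with \Cref{l:se} and the common shift by $-a_{n,1}/d$, it turns the estimate above and your other cases into convergence in $W^{1,q}(I,\cA_d(\C))$.
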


The map $\sol$ is not well-defined for $q \ge \frac{d}{d-1}$, even on polynomial curves in $\C^d$. In fact, the 
obstruction is already visible for
$Z^2 = x$,  $x \in (0,1)$, where the first derivative of the roots fails to be in $L^2((0,1))$. 
(In \Cref{t:know}, we assume $d \ge 2$ so that $\frac{d}{d-1}$ is well-defined, but the assertions are trivially true for $d=1$.)

Our main result is that the restriction on $q$ can be removed after a natural Lipschitz reparameterization 
associated with the rate $\ep + \|(\De \o \La_a)'\|_2$, where $\De$ is an Almgren embedding and $\ep>0$ 
is arbitrary. The parameter $\ep>0$ guarantees that the rate is strictly positive and hence the reparameterization is invertible on $I$. 
The potentially unbounded root velocity near collisions is absorbed into the reparameterization 
of time in a stable manner.

\begin{theorem} \label[t]{t:main1}
    Let $d \in \N_{\ge 1}$, $\ep>0$, and $\De : \cA_d(\C) \to \R^N$ an Almgren embedding.
    There is a reparameterization map $t=t(\ep,\De) : C^{d-1,1}(\ol I,\C^d) \to \Lip(\R,\ol I)$, 
    $a \mapsto t_a$, such that the following properties hold for all $a \in C^{d-1,1}(\ol I,\C^d)$:
    \begin{enumerate}
        \item $t_a : \R \to \ol I$ is surjective, increasing, and strictly increasing on $t_a^{-1}(I)$;
        \item $t_a$ is globally Lipschitz with Lipschitz constant $\Lip(t_a) \le \frac{1}{\ep}$; 
        \item the composite $\La_a \o t_a : \R \to \cA_d(\C)$ is globally Lipschitz with $\Lip(\La_a \o t_a) \le 1$.
    \end{enumerate}
    The induced map 
    \begin{align} \label{eq:rs}
        C^d(\ol I,\C^d) &\to W^{1,q}_{\on{loc}}(\R) \times  W^{1,q}_{\on{loc}}(\R,\cA_d(\C)) 
         \\
        a &\mapsto (t_a, \La_a \o t_a)
    \end{align}
    is continuous, for all $q \in [1,\infty)$.
\end{theorem}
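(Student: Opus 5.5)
The reparameterization is defined explicitly as an arc-length type inverse. Fix $I=(\alpha,\beta)$ and $a \in C^{d-1,1}(\ol I,\C^d)$. By \Cref{t:know}(1), $f_a := \De\o\La_a \in W^{1,1}(I,\R^N)$, so $\rho_a := \ep + \|f_a'\|_2 \in L^1(I)$ and $\rho_a\ge\ep>0$. Set
\[
s_a(x) := \int_{\alpha}^{x} \rho_a(y)\,dy, \qquad x \in \ol I .
\]
This is an absolutely continuous, strictly increasing bijection $\ol I \to [0,L_a]$ with $L_a=\ep|I|+\|f_a'\|_{L^1}$. Define $t_a:=s_a^{-1}$ on $[0,L_a]$, and extend it by $t_a\equiv\alpha$ on $(-\infty,0]$ and $t_a\equiv\beta$ on $[L_a,\infty)$. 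Then (1) is immediate.

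For (2): $s_a(x_2)-s_a(x_1)\ge \ep(x_2-x_1)$, so $\Lip(t_a)\le 1/\ep$.

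For (3): for $x_1<x_2$,
\[
|f_a(x_2)-f_a(x_1)| \le \int_{x_1}^{x_2}\|f_a'\|_2 \le s_a(x_2)-s_a(x_1).
\]
Hence $f_a\o t_a$ is $1$-Lipschitz, and it is constant outside $[0,L_a]$. To pass from $\De\o\La_a\o t_a$ to $\La_a\o t_a$ with constant exactly $1$, I would need $\De$ to be $1$-Lipschitz in the reverse direction, i.e.\ $\dd_2\le|\De(\cdot)-\De(\cdot)|$. Almgren embeddings can be normalized this way (cf.\ \Cref{ssec:Q}). Otherwise, I would read (3) as a statement in the metric pulled back by $\De$, which is what the definition of $W^{1,q}(\cdot,\cA_d(\C))$ uses anyway.

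For continuity, let $a_n\to a$ in $C^d$. By \Cref{t:know}(2), $f_{a_n}\to f_a$ in $W^{1,1}(I,\R^N)$, so $\rho_{a_n}\to\rho_a$ in $L^1$. Consequently $s_{a_n}\to s_a$ uniformly on $\ol I$ and $L_{a_n}\to L_a$. Since all the $s_{a_n}$ have inverse slopes bounded by $1/\ep$, the extended $t_{a_n}\to t_a$ uniformly on $\R$. The derivatives $t_{a_n}'=1/\rho_{a_n}\o t_{a_n}$ on $(0,L_{a_n})$, and $0$ outside, are bounded by $1/\ep$. Similarly, $g_n:=f_{a_n}\o t_{a_n}$ has $|g_n'|\le1$ and converges uniformly to $g:=f_a\o t_a$ (uniform convergence of $f_{a_n}$ plus equicontinuity). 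Because the derivatives are uniformly bounded and supported in a fixed compact set, $L^q_{\loc}$ convergence of derivatives for all $q<\infty$ follows from $L^1$ convergence by interpolation. So it suffices to prove $t_{a_n}'\to t_a'$ and $g_n'\to g'$ in $L^1_{\loc}$. This is the main obstacle, since composition with a varying inverse is involved.

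I would handle it by changing variables $\sigma=s_{a_n}(x)$. This gives
\[
\int_0^{L_{a_n}} |h(t_{a_n}(\sigma))|\,d\sigma=\int_I |h|\rho_{a_n}\,dx .
\]
Then I would compare with the limit by approximation. First choose a continuous $\phi$ on $\ol I$ close to $1/\rho_a$ (resp.\ to $f_a'/\rho_a$) in $L^1(\rho_a\,dx)$; this weighted $L^1$ distance equals the distance in $L^1(d\sigma)$ after composing with $t_a$. Next, split
\[
t_{a_n}'-t_a' = \bigl(1/\rho_{a_n}-\phi\bigr)\o t_{a_n} + \bigl(\phi\o t_{a_n}-\phi\o t_a\bigr) + \bigl(\phi-1/\rho_a\bigr)\o t_a .
\]
The middle term tends to $0$ by uniform convergence of $t_{a_n}$ and uniform continuity of $\phi$. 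The last term is small by the choice of $\phi$. The first term has $L^1$ norm
\[
\int_I|1/\rho_{a_n}-\phi|\rho_{a_n} = \int_I|1-\phi\rho_{a_n}|.
\]
This converges to $\int_I|1-\phi\rho_a|$, which is small, because $\rho_{a_n}\to\rho_a$ in $L^1$ and $\phi$ is bounded. The same three-term splitting works for $g_n'=(f_{a_n}'/\rho_{a_n})\o t_{a_n}$, using $f_{a_n}'\to f_a'$ in $L^1$. The mismatch of the intervals $[0,L_{a_n}]$ and $[0,L_a]$ contributes only $O(|L_{a_n}-L_a|)$, since all the derivatives are bounded. Finally, convergence of $\De\o\La_{a_n}\o t_{a_n}$ in $W^{1,q}_{\loc}$ is by definition convergence in $W^{1,q}_{\loc}(\R,\cA_d(\C))$, which gives continuity of \eqref{eq:rs}.
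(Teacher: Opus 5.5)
Your definition of $t_a$ and your proofs of (1) and (2) match the paper's. Your continuity argument is correct, but its key step takes a different route.

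The paper isolates the convergence in \Cref{p:Qvt}. It first shows $\alpha_n=f_n'/(\ep+\|f_n'\|_2)\to\alpha$ in $L^1$, using an a.e.\ convergent subsequence and dominated convergence. It then proves $\alpha_n\o t_n\to\alpha\o t$ in measure, using three ingredients:
\begin{enumerate}
\item Lusin's theorem and a Tietze extension, to replace $\alpha$ by a continuous $g$ off a small set;
\item uniform integrability of $\{s_n'\}$, to control $|s_n(E)|$ for small $|E|$;
\item a four-term splitting.
\end{enumerate}
Vitali then gives $L^q_{\loc}$ convergence, and $t_n'$ is handled identically with $\alpha_n=s_n'$.

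You instead (writing $f_n=f_{a_n}$, $\rho_n=\rho_{a_n}$) use the exact identity $\int_0^{L_n}|h\o t_n|\,d\sigma=\int_I|h|\,\rho_n\,dx$ together with the cancellation $|f_n'/\rho_n-\phi|\,\rho_n=|f_n'-\phi\rho_n|$ (resp.\ $|1-\phi\rho_n|$). This linearizes the problem, so $L^1$ convergence follows directly from $f_n'\to f_a'$ and $\rho_n\to\rho_a$ in $L^1$. Density of $C(\ol I)$ in $L^1(\rho_a\,dx)$ replaces Lusin, and you need no subsequences, uniform integrability, or convergence in measure. Likewise, the slope bound $s_n'\ge\ep$ gives directly $\|t_n-t\|_{L^\infty(\R)}\le\ep^{-1}\|s_n-s\|_{C^0(\ol I)}$. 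That is shorter than the proof of \Cref{l:rep}(3), which uses only strict monotonicity of $s$.

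The paper's route buys an abstract principle that needs only $\alpha_n\to\alpha$ and $s_n'\to s'$ in $L^1$, with no product structure and no bound on $\alpha_n$. Yours exploits $\alpha_n\rho_n=f_n'$, though it transfers verbatim to the metric setting of \Cref{t:ms} with $\rho_n=\ep+|f_n'|$.

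On (3), you are right that \Cref{ssec:Lr} applied to $f_a=\De\o\La_a$ only gives $\Lip(\De\o\La_a\o t_a)\le1$. The paper's proof simply refers to \Cref{ssec:Lr} at this point. Your remedy is misstated, however. An Almgren embedding already has $\Lip(\De)\le1$, so also requiring $\dd_2\le|\De(\cdot)-\De(\cdot)|$ would make $\De$ an isometric embedding of $(\cA_d(\C),\dd_2)$ into $\R^N$. That is impossible for $d\ge2$: in $\cA_2(\C)$ the points $[1,-1]$ and $[i,-i]$ have two distinct midpoints, $[\frac{1+i}{2},-\frac{1+i}{2}]$ and $[\frac{1-i}{2},-\frac{1-i}{2}]$, whereas midpoints in $\R^N$ are unique.

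A correct fix is to rescale inside the construction: set $t_a:=t_{\ep,C\De\o\La_a}$, where $C$ is the Lipschitz constant of $\De^{-1}$ on $\De(\cA_d(\C))$. Then $\La_a\o t_a$ is $1$-Lipschitz for $\dd_2$, and (1) and (2) are unaffected. Your continuity argument also applies unchanged, since $C\De\o\La_{a_n}\to C\De\o\La_a$ in $W^{1,1}$. Reading (3) in the pulled-back metric instead would not prove the statement as written.
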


While the construction of $t$ depends on the specific choice of $\ep$ and $\De$,
the continuity of the map in \eqref{eq:rs} is independent. 

This continuity is a consequence of the following convergence result, \Cref{t:seq}: $C^d$ convergence of the 
coefficients implies uniform convergence as well as $W^{1,q}$ convergence, for all $q \in [1,\infty)$, 
of the reparameterization and the reparameterized roots.

\begin{theorem} \label[t]{t:seq}
    Let $d \in \N_{\ge 1}$, $\ep>0$, $\De : \cA_d(\C) \to \R^N$ an Almgren embedding, and $t=t(\ep,\De)$ 
    the reparameterization map from \Cref{t:main1}.
    If $a_n \to a$ in $C^d(\ol I,\C^d)$, then:
    \begin{enumerate}
        \item $t_{a_n} \to t_a$ uniformly on $\R$ and in $W^{1,q}_{\loc}(\R)$ for all $q \in [1,\infty)$.
        \item $\La_{a_n} \o t_{a_n} \to \La_a \o t_a$ uniformly on $\R$ and in $W^{1,q}_{\loc}(\R,\cA_d(\C))$ for all $q \in [1,\infty)$.
        \item Assume that $\la_n : I \to \C^d$ is a continuous parameterization of the roots of $P_{a_n}$ and $\la_n$ converges in $C^0(\ol I,\C^d)$ 
            to a continuous parameterization $\la$ of the roots of $P_a$. 
            Then, for each $\ep >0$, there exist $t,t_n \in \Lip(\R,\ol I)$ with Lipschitz constants $\le \frac{1}{\ep}$ such that 
            $\la\o t, \la_n \o t_n \in \Lip(\R,\C^d)$ with Lipschitz constants $\le 1$ and
            \begin{itemize}
                \item $t_n \to t$ uniformly on $\R$ and in $W^{1,q}_{\loc}(\R)$ for all $q \in [1,\infty)$,
                \item $\la_{n} \o t_n \to \la \o t$ uniformly on $\R$ and in $W^{1,q}_{\loc}(\R,\C^d)$ for all $q \in [1,\infty)$.
            \end{itemize}
            Here $t,t_n$ are independent of $\De$.
    \end{enumerate}
\end{theorem}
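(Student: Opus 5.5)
The plan is to make the reparameterization explicit as the inverse of an arclength-type function and then transfer the convergence known from \Cref{t:know} through this inversion. Fix $I=(\alpha,\beta)$ and put $f_a := \De\o\La_a$. By \Cref{t:know}(1), $f_a\in W^{1,1}(I,\R^N)$. Define
\begin{equation}
  s_a(x) := \int_{x_0}^x \bigl(\ep + \|f_a'(y)\|_2\bigr)\,dy ,\qquad x\in \ol I ,
\end{equation}
for a fixed base point $x_0\in I$. This is a strictly increasing absolutely continuous bijection of $\ol I$ onto $[s_a(\alpha),s_a(\beta)]$, with $s_a'\ge\ep$. Let $t_a$ be its inverse, extended to $\R$ by constants. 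Then $t_a$ is $\frac1\ep$-Lipschitz, and $t_a'=1/(\ep+\|f_a'\|_2)\o t_a$ a.e. on the image interval. Also $\|(f_a\o t_a)'\|_2\le 1$, because the inverse of an absolutely continuous function with $s'\ge\ep>0$ is Lipschitz and the chain rule applies. So $f_a\o t_a$ is $1$-Lipschitz. If $\De$ is chosen with $\Lip(\De^{-1})\le 1$ (or after normalizing), this gives (1)--(3) of \Cref{t:main1}. In \Cref{t:seq}, $t=t(\ep,\De)$ is presumably this map.

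For \Cref{t:seq}(1), let $a_n\to a$ in $C^d$. By \Cref{t:know}(2), $f_{a_n}\to f_a$ in $W^{1,1}(I)$. Hence $s_{a_n}'\to s_a'$ in $L^1$, and so $s_{a_n}\to s_a$ uniformly on $\ol I$, including the endpoints. Inverses of increasing homeomorphisms with slopes $\ge \ep$ then converge uniformly: the bound $|t_{a_n}(\sigma)-t_a(\sigma)|\le \frac1\ep\|s_{a_n}-s_a\|_\infty$ follows from $s_{a_n}\ge\ep$-expansion, and it extends to $\R$ by constancy. For the derivatives, note that $0\le t_{a_n}'\le 1/\ep$ is uniformly bounded. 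It therefore suffices to show $t_{a_n}'\to t_a'$ in $L^1_{\loc}$ (or in measure), and then interpolate to get all $q<\infty$. A change of variables $\sigma=s_a(x)$ turns the statement into convergence of $g_n:=1/s_{a_n}'$ composed with $s_{a_n}\o t_a$ toward $1/s_a'$. I would handle this by approximating $1/s_a'$ in $L^1(ds_a)$ by a continuous function, using $L^1$ convergence of $s_{a_n}'$ (which gives convergence in measure of $1/s'_{a_n}$, since these are bounded by $1/\ep$), together with the uniform closeness of the maps $s_{a_n}\o t_a$ to the identity. An alternative route is weak-$*$ convergence of $t_{a_n}'$ plus convergence of a convex functional to upgrade to strong convergence. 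I expect this step to be the main technical obstacle, because composition with varying homeomorphisms does not commute with $L^1$ limits. The key observation for it is that pushforward measures are controlled: $t_{a_n}$ has Lipschitz constant $\le 1/\ep$, so preimages of small sets under $s_{a_n}$ have small measure.

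For (2), uniform convergence of $f_{a_n}\o t_{a_n}$ follows from three facts: $f_{a_n}\to f_a$ uniformly (from the $W^{1,1}$ convergence), $f_a$ is uniformly continuous, and $t_{a_n}\to t_a$ uniformly. For the derivatives, $(f_{a_n}\o t_{a_n})'=\bigl(f_{a_n}'/(\ep+\|f_{a_n}'\|)\bigr)\o t_{a_n}$, and these are bounded by $1$. So again $L^1_{\loc}$ convergence suffices. This is the same composition argument as in (1), applied to the vector field $F_n=f_{a_n}'/(\ep+\|f_{a_n}'\|)$, which converges in measure on $I$ to $F$. Convergence in $W^{1,q}_{\loc}(\R,\cA_d(\C))$ then follows from the definition via $\De$.

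For (3), I would repeat the construction with $f=\la$ in place of $\De\o\La_a$. A continuous root parameterization is absolutely continuous with $\la'\in L^1$ by \cite{ParusinskiRainer15}. Its $W^{1,1}$ convergence $\la_n\to\la$ given $C^0$ convergence should come from the arguments of \cite{Parusinski:2024ab}: there one shows $\|\la_n'\|_2$ is controlled and converges in $L^1$ to $\|\la'\|_2$. If needed, I would use that $|\la_j'|$ coincides a.e. with a quantity determined by the unordered tuple, which gives equi-integrability. Defining $t_n,t$ via $\ep+\|\la_n'\|_2$ makes them independent of $\De$. The rest proceeds exactly as in (1) and (2).
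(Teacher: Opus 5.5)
Your plan follows the paper's proof. The ingredients are the same: the arclength reparameterization, uniform convergence of $t_{a_n}$ from uniform convergence of $s_{a_n}$, reduction to convergence in measure via $|t_{a_n}'|\le\ep^{-1}$ and $\|(f_{a_n}\o t_{a_n})'\|_2\le 1$, a continuous approximation combined with uniform convergence of $t_{a_n}$, and \cite[Theorem 1.6]{Parusinski:2024ab} for (3). Your estimate $\|t_{a_n}-t_a\|_\infty\le\ep^{-1}\|s_{a_n}-s_a\|_\infty$, which uses $s_{a_n}'\ge\ep$, is a cleaner quantitative substitute for the argument in \Cref{l:rep}(3). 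The translation caused by your base point $x_0$ (the paper integrates from the left endpoint) is harmless.

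There is, however, a genuine problem: the observation you single out as the key to the composition step points the wrong way. The exceptional sets come from the $L^1$ convergence and the continuous approximation: they are where $1/s_{a_n}'$ or $F_n$ is far from its limit, or where the approximation fails. These are subsets $E\subseteq I$. What must be small is their transport to the $\si$-line, $\{\si : t_{a_n}(\si)\in E\}=s_{a_n}(E)$, whose measure is $\int_E s_{a_n}'\,dx$. The bound $\Lip(t_{a_n})\le\ep^{-1}$ is just the lower bound $s_{a_n}'\ge\ep$. It controls $|s_{a_n}^{-1}(F)|=|t_{a_n}(F)|$, but says nothing about $|s_{a_n}(E)|$.

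In your $L^1(ds_a)$ formulation this shows up as $\int_I|G\o t_{a_n}\o s_a|\,s_a'\,dx=\int_I|G|\,s_{a_n}'\,dx$, up to endpoint terms that vanish. So the weight becomes $s_{a_n}'$, not $s_a'$. (The map close to the identity here is $t_{a_n}\o s_a$, not $s_{a_n}\o t_a$.) The missing ingredient is an upper control of $s_{a_n}'$ uniform in $n$, i.e.\ uniform integrability of $\{s_{a_n}'\}$. Alternatively, use directly $\int_I|G|\,s_{a_n}'\le\int_I|G|\,s_a'+\|G\|_\infty\|s_{a_n}'-s_a'\|_{L^1}$ for bounded $G$. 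This is exactly \eqref{eq:aui} in the paper's proof of \Cref{l:cim}, and with it your route closes.

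Two further remarks. First, the alternative you mention in passing can be made into a complete and shorter proof. Take a bounded interval $J$ containing all supports. Then $t_{a_n}'$ and $(f_{a_n}\o t_{a_n})'$ converge weakly in $L^2(J)$, by uniform convergence plus the uniform Lipschitz bounds. The substitution $\si=s_{a_n}(x)$ gives
\[
\int_J (t_{a_n}')^2\,d\si=\int_I\frac{dx}{s_{a_n}'},\qquad \int_J\|(f_{a_n}\o t_{a_n})'\|_2^2\,d\si=\int_I\frac{\|f_{a_n}'\|_2^2}{\ep+\|f_{a_n}'\|_2}\,dx .
\]
Both converge, because $s_{a_n}'\to s_a'$ in $L^1$ and $r\mapsto r^2/(\ep+r)$ is $1$-Lipschitz on $[0,\infty)$. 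Weak plus norm convergence then gives strong convergence in $L^2(J)$, hence in $L^q(J)$ for all $q$ by the uniform bounds. This uses only uniform convergence of the curves and $L^1$ convergence of $\|f_{a_n}'\|_2$. In (3) it would therefore even avoid \cite[Theorem 1.6]{Parusinski:2024ab}: $\|\la_n'\|_2=|\La_{a_n}'|$ a.e.\ by \Cref{l:se}, and $|\La_{a_n}'|\to|\La_a'|$ in $L^1$ by \Cref{l:ms} and \Cref{t:know}.

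Second, your fallback for (3) does not suffice on its own. Equi-integrability of $\la_n'$ together with $C^0$ convergence yields only weak $L^1$ convergence of $\la_n'$. Your composition argument needs $\la_n'/(\ep+\|\la_n'\|_2)$ to converge in measure, and weak convergence does not give that.
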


Regarding (3), note that while there always exist continuous parameterizations of the roots on the interval $I$, not every 
continuous root parameterization of $P_a$ is the limit of continuous root parameterizations of $P_{a_n}$ as $a_n \to a$.

The proofs of \Cref{t:main1} and \Cref{t:seq} consist of two parts.
First, we prove a general convergence and reparameterization result in the realm of Almgren's $Q$-valued Sobolev functions.
We state it in \Cref{t:Qvt} for Sobolev functions on open bounded intervals with values in $\cA_Q(\R^n)$, 
the space of unordered $Q$-tuples of points in $\R^n$. In \Cref{t:ms}, we even obtain a weaker variant for 
absolutely continuous curves in arbitrary complete metric spaces. 
The natural Lipschitz reparameterization map $t$, which will be defined in this general setting in \Cref{sec:repar}, results from 
a standard construction taken from   
\cite[Section 1.1]{Ambrosio:2008aa}.

The second step in the proof of \Cref{t:main1,t:seq} is essentially a simple combination of \Cref{t:Qvt} with the continuity of the solution map 
provided by \Cref{t:know}; this is carried out in \Cref{sec:proofs}.

\Cref{t:main1,t:seq} close the gap between \Cref{t:know} and the case of \emph{hyperbolic polynomials}, 
where no reparameterization is necessary: 
the polynomial $P_a$ in \eqref{eq:Pa} is called hyperbolic if all roots $\la_j$ are real. 
The coefficient vectors of the hyperbolic polynomials of degree $d$ form a proper 
semialgebraic subset $\Hyp(d) \subseteq \R^d$.
In contrast to the general case, there is canonical 
ordering of the roots by size which induces a locally Lipschitz map $\la_\uparrow : \Hyp(d) \to \R^d$ and the 
superposition operator $\sol_\uparrow : a \mapsto \la_\uparrow \o a$. 
(Thus it is not necessary to work in the space of unordered tuples.)

\begin{theorem}[{\cite{Bronshtein79}, \cite{ParusinskiRainerHyp}, \cite{Parusinski:2024aa}}]\label[t]{t:knowHyp}
    Let $d$ be a positive integer. Then:
    \begin{enumerate}
        \item The map $\sol_\uparrow : C^{d-1,1}(I,\Hyp(d)) \to C^{0,1}(I,\R^d)$ is well-defined and bounded.
        \item The map $\sol_\uparrow  : C^{d}(I,\Hyp(d)) \to W^{1,q}_{\loc}(I,\R^d)$ is continuous for all $q \in [1,\infty)$.
    \end{enumerate}
\end{theorem}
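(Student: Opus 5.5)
The plan treats the two assertions separately. Assertion (1) is Bronshtein's theorem, and I would prove it by induction on the degree $d$ using the splitting principle. Assertion (2) is then derived from (1) by a compactness and a.e.-convergence argument.

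\textbf{Step 1: reduction and the splitting step.} Throughout, $\la_\uparrow$ is continuous on $\Hyp(d)$, since $\La$ is continuous and sorting real tuples is $1$-Lipschitz. The Tschirnhausen substitution $Z \mapsto Z - a_1/d$ preserves hyperbolicity and $C^{d-1,1}$ regularity, and it shifts all roots by the $C^{d-1,1}$ function $a_1/d$. So we may assume $a_1 \equiv 0$. Then
\[
\sum_j \la_j^2 = -2a_2 ,
\]
so $-a_2 \ge 0$, $|\la_j| \le \sqrt{2|a_2|}$, and $|a_i| \le C|a_2|^{i/2}$.

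Fix $t_0$. If $a_2(t_0) \neq 0$, the roots at $t_0$ are not all equal. The polynomial then splits near $t_0$ as $P_a = P_b P_c$ with hyperbolic factors of lower degree. The coefficients $b,c$ are $C^{d-1,1}$ by the implicit function theorem, because the map (coefficients of factors) $\mapsto$ (coefficients of product) is a local diffeomorphism at coprime pairs. Induction then applies to $P_b$ and $P_c$.

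\textbf{Step 2: uniform bounds near collisions (main obstacle).} The difficulty is that the splitting neighbourhoods shrink as $a_2 \to 0$, so the Lipschitz bounds from Step 1 must be made uniform. The tool I would use is a Glaeser-type inequality for the nonnegative $C^{d-1,1}$ function $-a_2$, combined with interpolation for the other $a_i$. On an interval of length $\rho \sim |a_2(t_0)|^{1/2}/(\text{norm})^{1/d}$ around $t_0$, these give
\[
|a_i^{(k)}| \le C\,|a_2(t_0)|^{(i-k)/2}.
\]
Rescaling $Z \mapsto Z/|a_2(t_0)|^{1/2}$ and $t \mapsto (t - t_0)/\rho$ produces a polynomial whose coefficients lie in a compact set of hyperbolic polynomials with $a_1 = 0$ and $|a_2| = 1$, and whose $C^{d-1,1}$ norms are bounded. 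On this compact set the splitting is uniform, so the inductive hypothesis gives a uniform Lipschitz bound. Undoing the scaling costs a factor $|a_2(t_0)|^{1/2}/\rho$, which is bounded. Points with $a_2(t_0) = 0$ are handled by the estimate $|\la_j| \le \sqrt{2|a_2|}$ together with $|a_2(t)| \le C|t - t_0|^2$ there. The result is that $\Lip(\la_\uparrow \o a)$ is bounded in terms of $\|a\|_{C^{d-1,1}}$.

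\textbf{Step 3: continuity.} Let $a_n \to a$ in $C^d$ and set $\la_n = \la_\uparrow \o a_n$. Then $\la_n \to \la$ uniformly, and by (1) the derivatives $\la_n'$ are bounded in $L^\infty$ on compact subintervals. Hence $\la_n' \rightharpoonup \la'$ weakly-$*$, and by dominated convergence it suffices to show $\la_n' \to \la'$ in measure on compacts.

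On the open set where the $j$-th root of $P_a$ is simple, the implicit function theorem gives $C^1$ convergence of the $j$-th roots directly. On the set $E_m$ where $\la_j(t)$ has multiplicity $m$, at a.e. point (density points of $E_m$) we have
\[
\la_j'(t) = \mu'(t),
\]
where $\mu$ is the mean of the local root cluster of $P_a$. For the cluster of $P_{a_n}$ near $\la_j(t)$, I would write $\la_{n,j} = \mu_n + (\la_{n,j} - \mu_n)$. The means converge, $\mu_n \to \mu$ in $C^1$, since by splitting they are $C^d$ functions of $a_n$. The remaining task is to show that the deviations $\la_{n,j} - \mu_n$ have derivatives that are small in measure. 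For this I would reapply the rescaled Lipschitz estimate of Step 2 to the Tschirnhausen-reduced cluster factor. The point of assuming $C^d$ rather than $C^{d-1,1}$ is that the top-order coefficient derivatives now have a uniform modulus of continuity. Since $a_2 \equiv 0$ on $E_m$ for the limit cluster, this modulus turns the scaling bound into an $o(1)$ bound near density points of $E_m$. This equicontinuity-of-the-top-derivative argument is the delicate point of the whole proof.
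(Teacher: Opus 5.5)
The paper does not prove this theorem. It quotes it from \cite{Bronshtein79}, \cite{ParusinskiRainerHyp}, \cite{Parusinski:2024aa} as background, and it is not used in the proofs of \Cref{t:main1,t:seq}. Your plan therefore has to stand on its own, and in outline it does.

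For (1), your argument is essentially the splitting-and-rescaling proof of Bronshtein's theorem in \cite{ParusinskiRainerHyp}. It consists of:
the Tschirnhausen reduction;
the bound $|a_i|\le C|a_2|^{i/2}$;
Glaeser's inequality for $-a_2\ge 0$;
interpolation on an interval of length $\rho\sim|a_2(t_0)|^{1/2}/A$, where the right normalization is the weighted norm $A=\max_i\|a_i\|^{1/i}_{C^{d-1,1}}$;
rescaling into a compact set of splittable hyperbolic polynomials, and induction on the degree.

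One point needs stating explicitly: every bound is local, valid on $I_0\Subset I_1\Subset I$ with constants depending on $\on{dist}(I_0,\partial I_1)$.
\begin{itemize}
\item This is forced. $Z^2=x$ has smooth hyperbolic coefficients on $[0,1]$, yet its roots $\pm\sqrt x$ are not Lipschitz on $(0,1)$.
\item Your treatment of the zeros of $a_2$ uses $a_2'(t_0)=0$, which holds only at interior points.
\end{itemize}
So (1) must be read with $C^{d-1,1}$ and $C^{0,1}$ in the local sense.

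For (2), the following parts of your argument are correct:
\begin{itemize}
\item the reduction to convergence in measure of $\la_n'$, via the uniform Lipschitz bound and dominated convergence;
\item the implicit function theorem on the open set where the $j$-th root is simple;
\item the identity $\la_j'=\mu'$ a.e.\ on $E_m$.
\end{itemize}
The step you call delicate does work, but it needs three ingredients that you only allude to.

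(a) Let $\tilde b$ be the reduced cluster factor on a splitting interval $V$. Then $\tilde b=0$ on $E_m\cap V$, so $\tilde b^{(k)}(t_0)=0$ for all $k\le d$ at every $t_0\in E_m$ that is a density point of $E_m$. To see this, iterate two facts: a differentiable function vanishing on a set has zero derivative at accumulation points of that set, and such density points accumulate at one another. Taylor's formula then gives $\|\tilde b_i^{(k)}\|_{L^\infty(J)}\le\omega(\de)\,\de^{d-k}$ on the interval $J$ of length $\de$ centred at $t_0$. Here $\omega$ is a modulus of continuity of $\tilde b^{(d)}$, and this is exactly where $C^d$ enters.

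(b) You need (1) in its homogeneous, time-rescaled form, which follows from your bound by scaling $Z$ and $t$:
\[
\Lip_{\frac12 J}\big(\text{roots of } P_{\tilde b_n}\big)\le C\,\de^{-1}\max_{2\le i\le m}\Big(\sum_{k\le m}\de^k\,\|\tilde b_{n,i}^{(k)}\|_{L^\infty(J)}\Big)^{1/i}.
\]
Once $\|\tilde b_n-\tilde b\|_{C^m(\ol V)}\le\de^{d+1}$, the right-hand side is at most $C(\omega(\de)+\de)^{1/m}$, because $i\le m\le d$.

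(c) To get a single $n_0$, cover a compact $K\subseteq E_m$ of almost full measure, consisting of such density points, by finitely many splitting intervals. Then take $\de$ below a Lebesgue number of this cover.

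With (a)–(c) you get $\operatorname{ess\,sup}_K|\la_{n,j}'-\la_j'|\to 0$. Combined with the simple-root part, this gives convergence in measure, hence in $L^q_{\loc}$.
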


The map $\sol_\uparrow$ is \emph{not} continuous with respect to the $C^{0,1}$ topology on the target space; 
see \cite[Example 1.12]{Parusinski:2024aa}.

For characteristic polynomials of Hermitian and normal matrices, even stronger eigenvalue stability results hold true,
see \cite{Parusinski:2026aa}.

%-----------------------------------------------------------------------------------------------------------------------
\section{Lipschitz reparameterization} \label{sec:repar}
%-----------------------------------------------------------------------------------------------------------------------

This preliminary section is based on \cite[Section 1.1]{Ambrosio:2008aa}.
Throughout this section, $(X,d)$ is a complete metric space. 
Let $(a,b) \subseteq \R$ be a bounded open interval.

%-----------------------------------------------------------------------------------------------------------------------
\subsection{Absolutely continuous curves and their metric slope}
%-----------------------------------------------------------------------------------------------------------------------

A map $f : (a,b) \to X$ is said to belong to $AC^q((a,b),X)$, for $q \in [1,\infty]$, if
there exists $m \in L^q((a,b))$ such that 
\begin{equation} \label{eq:md}
    d(f(x),f(y)) \le \int_x^y m(t)\, dt \quad \text{ whenever } a < x\le y < b.
\end{equation}
Each $f \in AC^q((a,b),X)$ is uniformly continuous and admits a continuous extension to $[a,b]$. 
Furthermore, the limit
\begin{equation}
    |f'|(x) := \lim_{y \to x} \frac{d(f(x),f(y))}{|x-y|}
\end{equation}
exists for $\cL^1$-almost every $x \in (a,b)$ and 
$|f'| \in L^q((a,b))$. (The Lebesgue measure is the only measure we will use; therefore it will not be mentioned explicitly from now on.) 
The function $|f'|$ is called the \emph{metric slope} or \emph{metric derivative} of $f$. 
Note that \eqref{eq:md} holds with $m=|f'|$ and, if $m$ is such that \eqref{eq:md} holds, then 
$|f'|\le m$ almost everywhere in $(a,b)$.

%-----------------------------------------------------------------------------------------------------------------------
\subsection{Lipschitz reparameterization} \label{ssec:Lr}
%-----------------------------------------------------------------------------------------------------------------------

Let $\ep>0$ and $f \in AC^1((a,b),X)$.  
Following \cite[Lemma 1.1.4]{Ambrosio:2008aa}, 
we define reparameterization maps $s = s_{\ep,f}$ and $t=t_{\ep,f}$ as follows.
Setting
\[
    s(x) := \int_a^x (\ep + |f'|(t))\, dt
\]
defines a strictly increasing absolutely continuous function $s  : [a,b] \to [0,L]$, where 
\[
    L = L_{\ep,f} := \ep\, (b-a) + \int_a^b |f'|(t)\, dt.
\]
Let $t  : [0,L] \to [a,b]$ be the inverse of $s$. Then 
$t$ is Lipschitz with $\Lip(t) \le \frac{1}{\ep}$ 
and 
\begin{equation} \label{eq:tos}
    t' \o s = \frac{1}{\ep + |f'|} \quad \text{ almost everywhere in } (a,b). 
\end{equation}
The composite $\hat f := f \o t$ is Lipschitz with $\Lip(\hat f)\le 1$ and 
\begin{equation} \label{eq:nlp}
    |\hat f'|\o s = \frac{|f'|}{\ep+|f'|} \quad \text{ almost everywhere in } (a,b);
\end{equation}
see \cite[Lemma 1.1.4]{Ambrosio:2008aa}.

The right endpoint $L$ of the domain of $t$ depends on $f$. It will be convenient to have the domain of $t$ independent of $f$. 
Thus we extend $t$ to $\R$ by setting $t(y) := a$ if $y < 0$ and $t(y) := b$ if $y > L$.
Then $t  : \R \to [a,b]$ is surjective, increasing, strictly increasing on $(0,L)$, and Lipschitz 
with $\Lip(t) \le \frac{1}{\ep}$.
Clearly, $\hat f = f \o t : \R \to X$ is still Lipschitz with $\Lip(\hat f)\le 1$.

\begin{lemma}  \label[l]{l:rep}
    Let $f,f_n \in AC^1((a,b),X)$, for $n \ge 1$, be such that $|f_n'| \to |f'|$ in $L^1((a,b))$. 
    For fixed $\ep>0$, let $s=s_{\ep,f}$, $s_n=s_{\ep,f_n}$, $t=t_{\ep,f}$, and $t_n=t_{\ep,f_n}$.
    Then:
    \begin{enumerate}
    \item $s_n \to s$ uniformly on $[a,b]$;
    \item $s_n' \to s'$ in $L^1((a,b))$;
    \item $t_n \to t$ uniformly on $\R$.
    \end{enumerate}
\end{lemma}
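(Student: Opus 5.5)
Parts (1) and (2) follow directly from the definitions, and (3) reduces to them via a uniform comparison of inverse functions.

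For (2), we have $s_n' - s' = |f_n'| - |f'|$ almost everywhere in $(a,b)$. This tends to $0$ in $L^1((a,b))$ by hypothesis.

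For (1), for every $x \in [a,b]$,
\[
|s_n(x)-s(x)| \le \int_a^x \bigl||f_n'|-|f'|\bigr|\,dt \le \||f_n'|-|f'|\|_{L^1((a,b))} =: \de_n \to 0 .
\]
Hence $\sup_{[a,b]}|s_n - s| \le \de_n$. In particular, the endpoints converge: $|L_n - L| = |s_n(b)-s(b)| \le \de_n$, where $L_n = L_{\ep,f_n}$.

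For (3), I use two facts.
\begin{itemize}
\item Since $s_n$ and $s$ have derivatives $\ge \ep$, they are bi-Lipschitz onto their images, with inverse Lipschitz constant $\le 1/\ep$.
\item All of $t_n$, $t$ are $\frac{1}{\ep}$-Lipschitz on $\R$ (by the extension described in \Cref{ssec:Lr}).
\end{itemize}
Fix $y \in \R$. I treat three cases.

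\emph{Case $y \in [0, \min(L,L_n)]$.} Put $x = t(y)$, so $s(x) = y$. Then
\[
|s_n(x) - y| = |s_n(x) - s(x)| \le \de_n .
\]
Since $y$ and $s_n(x)$ both lie in $[0,L_n]$, where $t_n$ inverts $s_n$, we get
\[
|t_n(y) - t(y)| = |t_n(y) - t_n(s_n(x))| \le \frac{1}{\ep}\,|y - s_n(x)| \le \frac{\de_n}{\ep}.
\]

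\emph{Case $y < 0$ or $y > \max(L,L_n)$.} Both functions are equal to $a$, respectively to $b$, so the difference is $0$.

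\emph{Case $y$ between $L$ and $L_n$.} Say $L \le y \le L_n$ (the other order is symmetric). Then $t(y) = b = t(L)$. Using the first case at the point $L$ and the Lipschitz bound for $t_n$,
\[
|t_n(y) - t(y)| \le |t_n(y) - t_n(L)| + |t_n(L) - t(L)| \le \frac{|L_n - L|}{\ep} + \frac{\de_n}{\ep} \le \frac{2\de_n}{\ep}.
\]

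Combining the cases gives $\sup_{\R}|t_n - t| \le 2\de_n/\ep \to 0$, which proves (3).

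The only point requiring care is this bookkeeping of the mismatched domains $[0,L_n]$ versus $[0,L]$. Once the uniform closeness of the $s_n$ and the lower bound $\ep$ on their derivatives are in hand, the inverse functions are uniformly close with the explicit rate $2\de_n/\ep$.
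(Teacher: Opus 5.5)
Your proof is correct. Parts (1) and (2) coincide with the paper's argument, but for (3) you take a different and more direct route.

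The paper argues qualitatively. It introduces the modulus of strict monotonicity $g(\delta)$ of the limit $s$. It then shows that $\|s_n - s\|_{C^0([a,b])} < g(\delta)/2$ forces $|t_n(y)-t(y)|<\delta$ for $y \in [0, s(b-\delta)]$, by locating the unique preimage of $y$ under $s_n$ in $(t(y)-\delta, t(y)+\delta)$. Only near the right endpoint $L$ does it use the $\frac{1}{\epsilon}$-Lipschitz bound for $t$ and $t_n$, together with uniform continuity of $s$.

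You instead use the Lipschitz bound on all of $\mathbb{R}$, via the identity $t(y) = x = t_n(s_n(x))$. This gives the explicit estimate $\sup_{\mathbb{R}}|t_n - t| \le \frac{2}{\epsilon}\|s_n - s\|_{C^0([a,b])}$. In other words, on functions with $s' \ge \epsilon$, passing from $s$ to its extended inverse is Lipschitz in the sup norm.

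The paper's main step is an instance of a general principle: inverses of increasing functions converge uniformly when the limit is continuous and strictly increasing. That principle needs no uniform lower bound on $s_n'$. Since $\epsilon>0$ is fixed throughout, your shortcut loses nothing and gains a rate and a shorter proof.

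One small inaccuracy: $s_n$ need not be Lipschitz, because $s_n' = \epsilon + |f_n'|$ is only integrable. So ``bi-Lipschitz onto their images'' should read ``with $\frac{1}{\epsilon}$-Lipschitz inverse'', which is all you actually use.
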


We will see in \Cref{t:ms} that $t_n' \to t'$ in $L^q_{\loc}(\R)$ for all $q \in [1,\infty)$. 

\begin{proof}
    (1) follows from 
    \begin{align}
        \MoveEqLeft  |s(x)  - s_n(x)| 
        = \Big|\int_a^x |f'|(t) - |f_n'|(t) \, dt \Big| 
        \le \int_a^b \big| |f'|(t) - |f_n'|(t) \big| \, dt.
    \end{align}
    
    (2) is evident, by definition.

    (3) 
    We will only use monotonicity and (1). By (1), $L_n = s_n(b) \to s(b) = L$.
    Define, for  small $\de>0$,  
    \begin{equation}
        g(\de) := \min\Big\{ \min_{x \in [a,b-\de]} (s(x+\de) - s(x)), \min_{x \in [a+\de,b]} (s(x) - s(x-\de))\Big\}.
    \end{equation}
    Then $g(\de)>0$ because $s$ is strictly increasing.
    Fix $\de < \frac{b-a}{2}$.
    By (1), there is $n_0 \ge 1$ such that, for all $n \ge n_0$,
    \begin{equation} \label{eq:C0}
        \|s_{n} - s\|_{C^0([a,b])} < \frac{g(\de)}{2}
    \end{equation}
    and $L_n \in (s(b-\de),L+\de)$.
    
    We first show that, for all $y \in [0,s(b-\de)]$ and $n \ge n_0$, 
    \begin{equation} \label{eq:cl}
       |t(y)-t_n(y)| < \de. 
    \end{equation}
    The remaining $y$ will be considered at the end of the proof.
    
    Let $y \in [0,s(b-\de)]$ and $x := t(y) \in [a,b-\de]$.

    If $x \in [a+\de,b-\de]$, then, by \eqref{eq:C0}, for $n\ge n_0$,
    \begin{align} \label{eq:u1}
        s_{n}(x+\de) &> s(x+\de) -  \tfrac{g(\de)}{2} \ge \frac{s(x+\de)+s(x)}{2} \ge s(x) = y,
        \\ \label{eq:u2}
        s_{n}(x-\de) &< s(x-\de) +  \tfrac{g(\de)}{2} \le \frac{s(x)+s(x-\de)}{2} \le s(x) = y.
    \end{align}
    Therefore, there is a unique $x' \in (x-\de,x+\de)$ such that 
    $s_{n}(x') = y$. As $y < L_n$, 
    we conclude $t_n(y)=x'$ and
    \begin{equation}
        |t(y) - t_{n}(y)|= |x -x'| < \de.
    \end{equation}

    If $x \in [a,a+ \de)$, then \eqref{eq:u1} still holds. Thus $s_n(x+\de)>y$ and $x+\de > t_{n}(y)$.
    Since $x- t_{n}(y) < a+\de - t_{n}(y) \le \de$, 
    we conclude that 
    \begin{equation}
        |t(y) - t_{n}(y)|= |x - t_{n}(y)| < \de. 
    \end{equation}
    Thus \eqref{eq:cl} is proved.

    If $y \in (s(b-\de),L+\de)$, then as $t$ and $t_n$ are $\frac{1}{\ep}$-Lipschitz on $\R$ and $t(L) = b = t_n(L_n)$,
    \begin{align}
        |t(y)-t_n(y)| &\le  |t(y)-t(L)| + |t_n(L_n)-t_n(y)|\le 
        \frac{|y-L| + |L_n - y|}{\ep} 
        \\
                      &\le \frac{2\,|y-L| + |L - L_n|}{\ep} \le \frac{3}{\ep} \max\{\de, s(b)-s(b-\de)\}, 
    \end{align}
    for all $n \ge n_0$.
    Since $s$ is uniformly continuous and 
    $t(y) = t_n(y)$ for all $n \ge n_0$ if $y<0$ or $y\ge L+\de$,  
    we conclude that $t_n \to t$ uniformly on $\R$.
\end{proof}

%-----------------------------------------------------------------------------------------------------------------------
\section{Convergence and reparameterization of \texorpdfstring{$Q$}{Q}-valued Sobolev functions}
%-----------------------------------------------------------------------------------------------------------------------

The main objective of this section is \Cref{t:Qvt}. 

%-----------------------------------------------------------------------------------------------------------------------
\subsection{$Q$-valued Sobolev functions} \label{ssec:Q}
%-----------------------------------------------------------------------------------------------------------------------

We follow \cite{Almgren00} and \cite{De-LellisSpadaro11}.
Let $Q$ be a positive integer.
We consider the space of unordered sets of $Q$ points in $\R^n$ admitting multiplicites, 
\[
    \cA_Q(\R^n) := \Big\{\sum_{i=1}^Q \a{p_i} : p_1,\ldots, p_Q \in \R^n \Big\},
\]
where $\a{p_i}$ denotes the Dirac mass of $p_i \in \R^n$, and endow it with the metric 
\[
    \dd_2\Big(\sum_{i=1}^Q \a{p_i},\sum_{i=1}^Q \a{q_i}\Big) := \min_{\si \in \on{S}_Q} \Big( \sum_{i=1}^Q  \|p_i - q_{\si(i)}\|_2^2\Big)^{1/2}.
\]
The metric space $(\cA_Q(\R^n),\dd_2)$ is complete (see \cite[Remark 0.3]{De-LellisSpadaro11}).

There exists an 
injective Lipschitz map $\De : \cA_Q(\R^n) \to \R^N$ with Lipschitz constant $\Lip(\De)\le 1$, 
where $N=N(n,Q)$.  
The inverse $\De|^{-1}_{\De(\cA_Q(\R^n))}$ is Lipschitz with Lipschitz constant bounded by $C(n,Q)$. 
Moreover, there is a
Lipschitz retraction of $\R^N$ onto $\De(\cA_Q(\R^n))$. See \cite[Theorem 2.1]{De-LellisSpadaro11}.

Almgren used this bi-Lipschitz embedding -- we call $\De$ an \emph{Almgren embedding} -- to define Sobolev spaces of $Q$-valued functions: 
for open $U \subseteq \R^m$ and $q \in [1,\infty)$ we define
\[
    W^{1,q}(U,\cA_Q(\R^n)) := \{f : U \to \cA_Q(\R^n) : \De \o f \in W^{1,q}(U,\R^N)\}.  
\]
For an equivalent intrinsic definition, see \cite[Definition~0.5 and Theorem~2.4]{De-LellisSpadaro11}.
Endowed with the metric
\begin{equation} \label{eq:Almgren}
    (f,g) \mapsto \|\De \o f - \De \o g\|_{W^{1,q}(U,\R^N)},
\end{equation}
$W^{1,q}(U,\cA_Q(\R^n))$ is a complete metric space (where functions that coincide almost everywhere are identified); 
see \cite[Lemma 3.1]{Parusinski:2024ab}.

The topology induced by the metric \eqref{eq:Almgren} is independent of the choice of $\De$. 
This was proved for $\cA_Q(\R^2) \cong \cA_Q(\C)$ in \cite[Theorem 3.11]{Parusinski:2024ab} in the case that $U$ is 
an interval and in \cite[Theorem 8.11]{Parusinski:2026aa} for general $U$. 
This proof generalizes to $\cA_Q(\R^n)$.

Let $I \subseteq \R$ be a bounded open interval and $f \in W^{1,q}(I,\cA_Q(\R^n))$. By \cite[Proposition 1.2]{De-LellisSpadaro11}, there exist $f_1,\ldots, f_Q \in W^{1,q}(I,\R^n)$ 
such that 
\begin{equation} \label{eq:se}
    f = \sum_{i=1}^Q \a{f_i}.
\end{equation}
The following lemma generalizes \cite[Lemma 11.1]{Parusinski:2024ab}; the proof is the same.

\begin{lemma} \label[l]{l:se}
    Let $f \in W^{1,q}(I,\cA_Q(\R^n))$ and $f_1,\ldots, f_Q \in W^{1,q}(I,\R^n)$ such that $f = \sum_{i=1}^Q \a{f_i}$. Then, 
    for almost every $x \in I$,
    \begin{equation}
        |f'|(x) = \|(f_1'(x),\ldots,f_Q'(x))\|_2. 
    \end{equation}
\end{lemma}

In \cite[Theorem 3.11]{Parusinski:2024ab}, it was proved that $f_n \to f$ in $W^{1,q}(I,\cA_Q(\R^2))$ if and only if $f_n \to f$ 
with respect to a semimetric $\dd^{1,q}$ involving the selection \eqref{eq:se}; see \cite[Definition 3.6]{Parusinski:2024ab}. 
As mentioned before, this generalizes to $\cA_Q(\R^n)$.
Together with \Cref{l:se}, it implies the following (cf.\ \cite[Corollary 1.4]{Parusinski:2024ab}):

\begin{lemma} \label[l]{l:ms}
   If $f_n \to f$ in $W^{1,q}(I,\cA_Q(\R^n))$ then $|f_n'| \to |f'|$ in $L^q(I)$.
\end{lemma}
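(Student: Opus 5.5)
The plan is to deduce \Cref{l:ms} from the characterization of convergence in \cite[Theorem 3.11]{Parusinski:2024ab}, generalized to $\cA_Q(\R^n)$, combined with the pointwise formula of \Cref{l:se}.

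The key input is the semimetric $\dd^{1,q}$ of \cite[Definition 3.6]{Parusinski:2024ab}. Convergence $f_n \to f$ in $W^{1,q}(I,\cA_Q(\R^n))$ is equivalent to $\dd^{1,q}(f_n,f)\to 0$. Unwinding the definition, this means we can fix Sobolev selections $f = \sum_i \a{f_i}$ and $f_n = \sum_i \a{f_{n,i}}$ as in \eqref{eq:se} for which
\[
\Big\| \min_{\si\in \on{S}_Q} \big\|\big(f'_{1}(x)-f'_{n,\si(1)}(x),\ldots,f'_{Q}(x)-f'_{n,\si(Q)}(x)\big)\big\|_2 \Big\|_{L^q(I)} \to 0 .
\]
Here the permutation may be chosen depending on $x$, measurably. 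The precise form is whatever \cite[Definition 3.6]{Parusinski:2024ab} gives, and it contains at least this derivative part.

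Next, by \Cref{l:se}, for almost every $x$ we have $|f'|(x) = \|(f_1'(x),\ldots,f_Q'(x))\|_2$. The same formula holds for $f_n$, and it holds for every ordering of the $f_{n,i}$, because the Euclidean norm of the vector of derivatives is invariant under permutations of the components. Hence, pointwise almost everywhere and for the minimizing $\si = \si_x$, the reverse triangle inequality gives
\[
\big| |f'|(x) - |f_n'|(x) \big| = \Big| \|(f_i'(x))_i\|_2 - \|(f'_{n,\si_x(i)}(x))_i\|_2 \Big| \le \big\|(f_i'(x) - f'_{n,\si_x(i)}(x))_i\big\|_2 .
\]
Taking $L^q$ norms and using the convergence above yields $|f_n'|\to|f'|$ in $L^q(I)$.

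The main obstacle is the first step: making sure the generalized \cite[Theorem 3.11]{Parusinski:2024ab} really provides the derivative comparison with an $x$-dependent permutation, applied to some selections that can be fixed. If instead the semimetric involves a single global permutation, the argument only gets easier. The other point to check is that \Cref{l:se} applies to the chosen selections, which holds because they are $W^{1,q}$ selections as in \eqref{eq:se}.

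If this route were unavailable, I would fall back on a direct argument. Since $\De$ is bi-Lipschitz, $|f'|$ is comparable to $|(\De\o f)'|$, and the metric slopes converge in measure. The $L^q$-convergence of $\De\o f_n$ then gives domination of $|f_n'|\le C|(\De\o f_n)'|$ in $L^q$, and Vitali's theorem would finish. However, pointwise a.e. convergence of the slopes is not automatic this way, so I prefer the route through $\dd^{1,q}$.
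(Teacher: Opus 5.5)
Your proposal is correct and follows the same route as the paper. The paper deduces the lemma from the characterization of $W^{1,q}$-convergence through the semimetric $\dd^{1,q}$ (Theorem 3.11 and Definition 3.6 of the cited work, generalized to $\cA_Q(\R^n)$), combined with the pointwise formula of \Cref{l:se}; your permutation-invariance and reverse-triangle-inequality step supplies exactly the detail the paper leaves implicit in ``together with \Cref{l:se}, it implies''.
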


%-----------------------------------------------------------------------------------------------------------------------
\subsection{Strong convergence and reparameterization}
%-----------------------------------------------------------------------------------------------------------------------

Let $(a,b) \subseteq \R$ be a bounded open interval. We will consider functions $f \in W^{1,q}((a,b),\cA_Q(\R^m))$. 
Then $f \in AC^q((a,b),\cA_Q(\R^m))$, by \Cref{l:se} (see also \cite[Remark 1.1.3]{Ambrosio:2008aa} and 
\cite[Proposition 1.2]{De-LellisSpadaro11}). In particular, $f$ is uniformly continuous and extends continuously to $[a,b]$.

\begin{theorem} \label[t]{t:Qvt} 
    Let $f_n \to f$ in $W^{1,q}((a,b),\cA_Q(\R^m))$ for some $q>1$ 
    or, alternatively, $f_n \to f$ in $W^{1,1}((a,b),\cA_Q(\R^m))$ and $f_n(x_0) \to f(x_0)$ for some $x_0 \in [a,b]$. 
    Fix $\ep>0$ and an Almgren embedding $\De : \cA_Q(\R^m) \to \R^N$, and consider $t := t_{\ep,\De \o f}$ and $t_n := t_{\ep,\De \o f_n}$,
    provided by \Cref{ssec:Lr}.
    Then $t_n \to t$ in $W^{1,q}_{\loc}(\R)$ and $f_n \o t_n \to f\o t$ in $W^{1,q}_{\loc}(\R,\cA_Q(\R^m))$, 
    for all $q \in[1,\infty)$.
\end{theorem}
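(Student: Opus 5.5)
We reduce everything to statements about the metric slopes and \Cref{l:rep}. Write $g := \De\o f$ and $g_n := \De \o f_n$; these converge in $W^{1,q}((a,b),\R^N)$. We may apply \Cref{l:ms} to $\R^N$-valued (i.e.\ $1$-valued) functions, or simply note $|g_n'| = \|g_n'\|_2$. Either way, $|g_n'| \to |g'|$ in $L^q$, hence in $L^1$. By \Cref{l:rep}, $s_n \to s$ uniformly, $L_n \to L$, and $t_n \to t$ uniformly on $\R$. Uniform convergence of $f_n$ to $f$ on $[a,b]$ follows from $W^{1,1}$ convergence in one dimension together with the convergence at one point. This explains the extra hypothesis when $q=1$; for $q>1$, $L^q$-convergence of $\De\o f_n$ already controls the constants. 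Uniform convergence of $f_n \o t_n$ to $f \o t$ on $\R$ then follows from
\[ \dd_2(f_n\o t_n, f\o t) \le \dd_2(f_n \o t_n, f \o t_n) + \dd_2(f\o t_n, f \o t) \]
and the uniform continuity of $f$ on $[a,b]$.

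The key step is the derivative convergence. Since $t_n'$ and $t'$ are bounded by $1/\ep$, and $|(f_n\o t_n)'|$ and $|(f\o t)'|$ are bounded by $1$, it suffices (by dominated convergence on compact subsets of $\R$) to prove $L^1_{\loc}$ convergence, or even convergence in measure. For $t$ we change variables $y = s(x)$:
\[ \int_0^{L} |t_n'(y) - t'(y)|\,dy . \]
We exploit $t_n' = \frac{1}{\ep+|g_n'|}\o t_n$ and $t' = \frac{1}{\ep+|g'|}\o t$. The function $h:=\frac1{\ep+|g'|}$ lies in $L^\infty \subseteq L^1$ and $h_n := \frac1{\ep+|g_n'|} \to h$ in $L^1(a,b)$, since $u\mapsto \frac1{\ep+u}$ is $\ep^{-2}$-Lipschitz. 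Thus we need $h_n \o t_n \to h \o t$ in $L^1(0,L)$; the region near $L$ and $L_n$ is negligible since $L_n\to L$ and the integrands are bounded. We split
\[ \|h_n\o t_n - h\o t\|_{L^1} \le \|(h_n - h)\o t_n\|_{L^1} + \|h\o t_n - h\o t\|_{L^1}. \]
For the first term, change variables using $dy = s_n'(x)\,dx = (\ep + |g_n'|)\,dx$. This gives
\[ \int |h_n-h|\,(\ep+|g_n'|)\,dx , \]
which tends to $0$ because $|h_n - h|\le 2/\ep$ and the products converge in $L^1$: note that $|h_n-h|(\ep+|g_n'|)\le |g_n'-g'|/\ep$ pointwise. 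For the second term we approximate $h$ in $L^1(a,b)$ by a continuous $\phi$. Then $\phi\o t_n\to\phi\o t$ uniformly, while
\[ \|(h-\phi)\o t_n\|_{L^1} = \int |h-\phi|(\ep+|g_n'|)\,dx . \]
This is small uniformly in $n$, since the $|g_n'|$ converge in $L^1$ and hence are uniformly integrable and $|h-\phi|$ can be taken bounded. This is the main technical step, and I expect that handling composition with varying $t_n$ in this way (rather than via pointwise convergence a.e., which fails in general) is the real obstacle.

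The same scheme handles $\De\o f_n\o t_n$. Its derivative is $(g_n'\,h_n)\o t_n$ a.e.\ by the chain rule for Lipschitz$\o$AC, and $g_n' h_n \to g' h$ in $L^1(a,b)$ with $|g_n'h_n|\le 1$. The identical splitting, with $h$ replaced by the vector function $g'h$, gives $L^1_{\loc}$ convergence of $(\De\o f_n\o t_n)'$. The bound $1$ on these derivatives then upgrades this to $L^q_{\loc}$ for every $q<\infty$. Combined with uniform convergence, this gives convergence in $W^{1,q}_{\loc}(\R,\R^N)$, i.e.\ in $W^{1,q}_{\loc}(\R,\cA_Q(\R^m))$. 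Here the reparameterized curves are taken with respect to $\De\o f$. The statement is understood with the Almgren metric $\De$-dependence handled by the independence of topology noted in \Cref{ssec:Q}.
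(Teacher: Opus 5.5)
Your proposal is correct and follows essentially the paper's route: compose with $\De$, get uniform convergence of $t_n$ and of the composites from \Cref{l:rep}, and handle the derivatives through the same split into $\al_n\o t_n-\al\o t_n$, $(\al-\phi)\o t_n$, $\phi\o t_n-\phi\o t$ and $(\phi-\al)\o t$, where uniform integrability of $s_n'=\ep+\|g_n'\|_2$ controls the distortion introduced by $t_n$. The difference is that you estimate these terms directly in $L^1$, via $\int_0^{L_n}F\o t_n\,dy=\int_a^b F\,(\ep+\|g_n'\|_2)\,dx$ and a continuous $L^1$-approximant, instead of the paper's Lusin--Tietze step, convergence in measure and Vitali; this also works for the vector term, since $\|\al_n-\al\|_2\,(\ep+\|g_n'\|_2)\le 2\,\|g_n'-g'\|_2$ pointwise.
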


\Cref{t:Qvt} is a consequence of the following proposition, by composition with $\De$.

\begin{proposition} \label[p]{p:Qvt}
    Let $f_n \to f$ in $W^{1,q}((a,b),\R^N)$ for some $q>1$ 
    or, alternatively, $f_n' \to f'$ in $L^1((a,b),\R^N)$ and $f_n(x_0) \to f(x_0)$ for some $x_0 \in [a,b]$. 
    Fix $\ep>0$ and let $t := t_{\ep,f}$ and $t_n := t_{\ep,f_n}$.
    Then $t_n \to t$ in $W^{1,q}_{\loc}(\R)$ and $f_n \o t_n \to f \o t$ in $W^{1,q}_{\loc}(\R,\R^N)$, 
    for all $q \in[1,\infty)$.
\end{proposition}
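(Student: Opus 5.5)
Under either hypothesis we have $f_n' \to f'$ in $L^1((a,b),\R^N)$ and $f_n(x_0)\to f(x_0)$; in the case $q>1$ the second fact follows from the Sobolev embedding $W^{1,q}\hookrightarrow C^0$ on a bounded interval. Hence $f_n \to f$ uniformly on $[a,b]$. Since $\big||f_n'|-|f'|\big| \le |f_n'-f'|$ pointwise, also $|f_n'|\to|f'|$ in $L^1$, and \Cref{l:rep} gives $s_n\to s$ uniformly, $s_n'\to s'$ in $L^1$, and $t_n\to t$ uniformly on $\R$.

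For $f_n\o t_n\to f\o t$ uniformly, split
\[
|f_n\o t_n - f\o t| \le \|f_n-f\|_{C^0} + |f\o t_n - f\o t|
\]
and use the uniform continuity of $f$ on $[a,b]$. All functions $t_n,t,f_n\o t_n,f\o t$ are Lipschitz with uniformly bounded constants ($1/\ep$ and $1$). Therefore, on a compact $K\subseteq\R$, convergence in $L^q$ of the derivatives for every $q<\infty$ reduces to convergence in measure, or in $L^1$: interpolate with $\|g\|_{L^q}^q\le \|g\|_\infty^{q-1}\|g\|_{L^1}$. So it suffices to show $t_n'\to t'$ and $(f_n\o t_n)'\to(f\o t)'$ in $L^1_{\loc}(\R)$.

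\textbf{Main step (derivatives).} Outside $[0,\max(L,L_n)]$ the derivatives vanish, and $L_n\to L$, so the thin region between $L$ and $L_n$ contributes at most $C|L-L_n|\to0$. On $(0,L)$, change variables $y=s(x)$, $dy = s'(x)\,dx$, so that
\[
\int_0^L |t_n'(y)-t'(y)|\,dy = \int_a^b |t_n'(s(x)) - t'(s(x))|\,s'(x)\,dx .
\]
The difficulty is that $t_n'$ is naturally composed with $s_n$, not $s$. I would avoid this by an approximation argument. Write $t_n'\o s_n = 1/s_n'$ and $t'\o s = 1/s'$, where these are bounded by $1/\ep$ and $1/s_n' \to 1/s'$ in $L^1((a,b))$ (since $|1/u-1/v|\le |u-v|/\ep^2$). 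Pull back to the $y$-variable: $t_n' = (1/s_n')\o t_n$. Approximate $h=1/s'$ in $L^1((a,b))$ by a continuous $\phi$. Then estimate
\[
\|(1/s_n')\o t_n - (1/s')\o t\|_{L^1(0,L')}
\le \|(1/s_n'-\phi)\o t_n\| + \|\phi\o t_n-\phi\o t\| + \|(\phi - 1/s')\o t\| .
\]
The middle term tends to zero because $t_n\to t$ uniformly and $\phi$ is uniformly continuous. The outer terms are controlled by the change of variables: $\int |g\o t_n|\,dy = \int |g|\, s_n'\,dx$. This is a problem since $s_n'$ is unbounded. I would handle it by weighting $\phi$ appropriately, which is the subtle point, or better by a different route. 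Since $s_n'\ge\ep$, the map $t_n$ pushes Lebesgue measure on $(0,L_n)$ to $s_n'\,dx$, and $s_n'\to s'$ in $L^1$ gives equi-integrability. Choosing $\phi$ bounded by $1/\ep$ with $\int|1/s'-\phi|\,s'\,dx$ small, we get $\int|1/s_n'-\phi|\,s_n' \le \int |1/s_n'-1/s'|s_n' + \int|1/s'-\phi|s_n'$. Both pieces are small using $|1/s_n' - 1/s'|\,s_n' \le |s_n'-s'|/\ep$ and $s_n'\to s'$ in $L^1$ with $\phi$ bounded. This is the step I expect to require the most care.

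The same scheme applies to $(f_n\o t_n)' = (f_n'/s_n')\o t_n$, a.e. by the chain rule for absolutely continuous functions composed with Lipschitz increasing maps. The functions $f_n'/s_n'$ are bounded by $1$, and
\[
\Big|\frac{f_n'}{s_n'}-\frac{f'}{s'}\Big|\,s_n' \le |f_n'-f'| + |f'|\,\frac{|s'-s_n'|}{s'} \le |f_n'-f'|+|s_n'-s'| ,
\]
which tends to $0$ in $L^1$. This gives $L^1_{\loc}$ convergence, and then by the interpolation above, $L^q_{\loc}$ convergence for all $q<\infty$. Combined with the uniform convergence this yields convergence in $W^{1,q}_{\loc}$.
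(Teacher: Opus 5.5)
Your proof is correct. It shares the paper's skeleton: replace the limit density by a continuous function, handle the middle term by the uniform convergence $t_n\to t$ from \Cref{l:rep}, and pull the two outer terms back to $(a,b)$ through $s_n$ and $s$. The difference is that you implement this with $L^1$ norms, whereas the paper works with convergence in measure.

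In \Cref{l:cim} and \Cref{l:tim}, the paper first shows $\al_n=f_n'/(\ep+\|f_n'\|_2)\to\al$ in unweighted $L^1((a,b))$, via an a.e.\ subsequence and generalized dominated convergence. It then uses Lusin's theorem to get a continuous $g$ agreeing with $\al$ on a compact set of nearly full measure. The bad sets in the $y$-variable are images $s_n(E)$, and it bounds them via Chebyshev and the uniform integrability of $\{s_n'\}$. Vitali, or the bound $\|\hat f_n'\|_2\le 1$, then upgrades to $L^q_{\loc}$.

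You instead use the push-forward identity $\int_0^{L_n}|g\o t_n|\,dy=\int_a^b|g|\,s_n'\,dx$, and density of continuous functions in $L^1(s'\,dx)$ in place of Lusin. The pointwise bounds $|1/s_n'-1/s'|\,s_n'\le\ep^{-1}|s_n'-s'|$ and $\|f_n'/s_n'-f'/s'\|_2\,s_n'\le\|f_n'-f'\|_2+|s_n'-s'|$ replace the uniform-integrability-of-images step. This gives $L^1(\R)$ convergence of the derivatives directly, with explicit estimates and no subsequence or Vitali argument.

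To close the step you flagged, record two things. First, $\phi$ can be taken with values in $[0,1/\ep]$, because $1/s'$ takes values in $(0,1/\ep]$, so truncation does not increase the error. Second, $\int_a^b|1/s'-\phi|\,s_n'\,dx\le\int_a^b|1/s'-\phi|\,s'\,dx+\tfrac{2}{\ep}\|s_n'-s'\|_{L^1}$. The same works for $f'/s'$ with a continuous $\psi$ satisfying $\|\psi\|_2\le 1$.

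The paper's formulation buys robustness. Its argument gives $\al_n\o t_n\to\al\o t$ in measure for any $\al_n\to\al$ in $L^1((a,b))$, bounded or not. That is why it can be reused verbatim with the unbounded $\al_n=s_n'$ in \Cref{l:tim}. Your route needs the weighted convergence $\int_a^b|\al_n-\al|\,s_n'\,dx\to 0$, which you obtain from the specific structure: boundedness of $1/s_n'$ and $f_n'/s_n'$, together with $\|f'\|_2\le s'$. Both versions carry over to the metric-space setting of \Cref{t:ms} by replacing $f_n'$ with $|f_n'|$.
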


The rest of this section is dedicated to the proof of \Cref{p:Qvt}.
For simplicity of notation, we write $\hat f := f \o t$ and $\hat f_n := f_n \o t_n$. 
Moreover, $s := s_{\ep,f}$, $s_n := s_{\ep,f_n}$, $L:= s(b)$, and $L_n := s_n(b)$; see \Cref{ssec:Lr}.

We will prove 
\begin{enumerate}
    \item $\hat f_n \to \hat f$ uniformly on $\R$,
    \item $\hat f_n' \to \hat f'$ in $L^q_{\loc}(\R,\R^N)$,
    \item $t_n' \to t'$ in $L^q_{\loc}(\R)$,
\end{enumerate}
for all $q \in [1,\infty)$.
This will imply \Cref{p:Qvt}, because $t_n \to t$ uniformly on $\R$, by \Cref{l:rep}.

Let us check (1). If $f_n \to f$ in $W^{1,q}((a,b),\R^N)$ for some $q>1$, then $f_n \to f$ uniformly on $[a,b]$, 
by Morrey's inequality, and (1) follows 
because $t_n \to t$ uniformly on $\R$ (and $f$ is uniformly continuous on $[a,b]$).
If on the other hand $f_n' \to f'$ in $L^1((a,b),\R^N)$ and $f_n(x_0) \to f(x_0)$ for some $x_0 \in [a,b]$,
then
\begin{align*}
    \| f(x) - f_n(x)\|_2 = \Big\| f(x_0) - f_n(x_0) + \int_{x_0}^x f'(u) - f_n'(u)\, du\Big\|_2 \to 0
\end{align*}
uniformly for $x \in [a,b]$ and (1) follows as before.

Next we prove (2).
Since $t$ is increasing, $\hat f = f \o t$ is differentiable 
almost everywhere and the chain rule holds (see e.g. \cite[Corollary 3.50]{Leoni09}),
\begin{equation} \label{eq:hld1} 
    \hat f' = (f' \o t)\cdot t' = (f' \o t) \cdot \frac{1}{s' \o t}
    = \frac{f'}{\ep + \|f'\|_2} \o t
\end{equation}
almost everywhere in $(0,L)$ and $\hat f' = 0$ in $\R \setminus [0,L]$.
Similarly, 
\begin{equation} \label{eq:hld2} 
    \hat f_n' =  \frac{f_n'}{\ep + \|f_n'\|_2} \o t_n
\end{equation}
almost everywhere in $(0,L_n)$ and $\hat f' = 0$ in $\R \setminus [0,L_n]$.

In \Cref{l:cim}, we will prove that $\hat f_n' \to \hat f'$ in measure on $\R$.
Since $\|\hat f_n'\|_2 \le 1$ almost everywhere in $\R$ and hence the family $\{\hat f_n'\}$ is 
uniformly integrable on each bounded interval, this implies that $\hat f_n' \to \hat f'$ in $L^q_{\loc}(\R,\R^N)$ for all $q \in [1,\infty)$, 
by Vitali's convergence theorem. Thus (2) is proved.

\begin{lemma} \label[l]{l:cim}
    $\hat f_n' \to \hat f'$ in measure on $\R$.
\end{lemma}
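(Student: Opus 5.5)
We must show $\hat f_n' \to \hat f'$ in measure on $\R$. The plan is to pass to the original time variable via the change of variables $y = s(x)$, where convergence of $f_n'$ in $L^1$ is available, and to control the mismatch between $t_n$ and $t$ using the uniform convergence of \Cref{l:rep}.

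Set $g := \frac{f'}{\ep+\|f'\|_2}$ and $g_n := \frac{f_n'}{\ep+\|f_n'\|_2}$ on $(a,b)$. The map $v \mapsto \frac{v}{\ep+\|v\|_2}$ is Lipschitz on $\R^N$ with constant $\le \frac{2}{\ep}$, so $\|g_n-g\|_{L^1((a,b))} \le \frac{2}{\ep}\|f_n'-f'\|_{L^1((a,b))} \to 0$. In both hypotheses of \Cref{p:Qvt} we have $f_n'\to f'$ in $L^1$, and hence $|f_n'| = \|f_n'\|_2 \to \|f'\|_2 = |f'|$ in $L^1$. Therefore \Cref{l:rep} applies, giving $t_n\to t$ uniformly and $L_n\to L$.

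By \eqref{eq:hld1} and \eqref{eq:hld2}, $\hat f' = g\o t$ and $\hat f_n' = g_n\o t_n$ on $(0,L)$ and $(0,L_n)$ respectively, and both vanish outside these intervals. Since $L_n\to L$, the set where exactly one of them is forced to vanish has measure $|L_n-L| \to 0$, so it suffices to work on $(0,L)$. There we split
\[
\|\hat f_n'-\hat f'\|_2 \le \|g_n\o t_n - g\o t_n\|_2 + \|g\o t_n - g\o t\|_2 .
\]

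\emph{First term.} Using $t_n' = \frac{1}{s_n'\o t_n}$ and the change of variables $y=s_n(x)$,
\[
\int_0^{L_n}\|g_n-g\|_2\o t_n\,dy=\int_a^b \|g_n-g\|_2\, s_n'\,dx .
\]
Here $s_n' = \ep + |f_n'|$ converges in $L^1$ and $\|g_n-g\|_2\le 2$, so the right-hand side tends to $0$. This can be seen by splitting on the set $\{s_n' > K\}$, whose contribution is uniformly small by the uniform integrability of $(s_n')$, and bounding the rest by $K\|g_n-g\|_{L^1}$. Thus the first term tends to $0$ in $L^1$, hence in measure.

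\emph{Second term (main obstacle).} The difficulty is that $g$ is merely measurable, so composing with the uniformly close maps $t_n$ and $t$ gives no pointwise control. To handle this, approximate $g$ in $L^1((a,b))$ by a continuous $h$ with $\|h\|_\infty\le 1$. Write
\[
g\o t_n - g\o t = (g-h)\o t_n + (h\o t_n - h\o t) + (h-g)\o t .
\]
The middle term tends to $0$ uniformly by the uniform continuity of $h$ and \Cref{l:rep}(3). For the outer terms, the change of variables gives $\int\|g-h\|_2\o t_n\,dy=\int_a^b\|g-h\|_2\,s_n'\,dx$, and similarly with $s'$ for the term involving $t$. These are made small uniformly in $n$ by combining the bound $\|g-h\|_2\le 2$ with the uniform integrability of $(s_n')$, exactly as in the first term: choose $K$ first, then $h$ with $K\|g-h\|_{L^1}$ small. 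Collecting the three pieces, the second term tends to $0$ in $L^1((0,L))$, hence in measure. Together with the first term and the endpoint estimate, this proves \Cref{l:cim}.
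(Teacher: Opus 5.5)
Your proof is correct and is essentially the paper's argument. Your splitting $g_n\o t_n-g\o t=(g_n-g)\o t_n+(g-h)\o t_n+(h\o t_n-h\o t)+(h-g)\o t$ is exactly the paper's $A_n+B_n+C_n+D$ (your $g$ is the paper's $\al$, your $h$ its continuous $g$), and each term is controlled by the same tools: change of variables through $s_n$, uniform integrability of $(s_n')$, and the uniform convergence $t_n\to t$ from \Cref{l:rep}.

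The differences are only technical, and they work in your favour. You obtain $\al_n\to\al$ in $L^1$ from the $\frac{2}{\ep}$-Lipschitz bound instead of the generalized dominated convergence theorem. You also use $L^1$-density of continuous functions with weighted $L^1$ estimates instead of Lusin's theorem with measure estimates, which gives $L^1(\R)$ convergence of $\hat f_n'$ directly and makes the subsequent Vitali step unnecessary.
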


\begin{proof}

Since $f_n \to f$ in $W^{1,1}((a,b),\R^N)$, after passing to a subsequence, 
$f_n' \to f'$ almost everywhere in $(a,b)$. It follows that 
\begin{equation}
    \frac{f_n'}{\ep + \|f_n'\|_2} \to \frac{f'}{\ep + \|f'\|_2} \quad \text{ almost everywhere in } (a,b).
\end{equation}

By the generalized dominated convergence theorem (see e.g.\ \cite[p.~59]{Folland99}), we conclude that 
\begin{equation} \label{eq:l1c}
    \frac{f_n'}{\ep + \|f_n'\|_2 } \to \frac{f'}{\ep + \|f'\|_2 }\quad \text{ in } L^1((a,b),\R^N). 
\end{equation}
We proved \eqref{eq:l1c} for a subsequence, but this implies its validity for the original sequence.

    Let us put 
    \[
        \al_n := \frac{f_n'}{\ep + \|f_n'\|_2 }, \quad \al := \frac{f'}{\ep + \|f'\|_2 }.
    \]
    Then $\al_n \to \al$ in $L^1((a,b),\R^N)$, by \eqref{eq:l1c}. 
    We have $\hat f' = \al \o t$ almost everywhere in $(0,L)$, by \eqref{eq:hld1},
    and $\hat f'(y) = 0$ if $y<0$ or $y>L$.
    By \eqref{eq:hld2}, $\hat f_n' = \al_n \o t_n$ almost everywhere in $(0,L_n)$
    and $\hat f_n'(y) = 0$ if $y<0$ or $y>L_n$.

    Let $\rh,\si>0$ be fixed.
    We have to show that there exists $n_0 \ge 1$ such that 
    \begin{equation} \label{eq:ats}
        |\{y \in \R : \|\hat f_n'(y) - \hat f'(y)\|_2 \ge \rh\}| \le \si, \quad n \ge n_0.
    \end{equation}
    
    We claim that it suffices to show \eqref{eq:ats} for $y \in (0,L-\frac{\si}{4})$ (assuming of course that $\si$ is small enough).
    Indeed,
    if $y < 0$ then $\hat f_n'(y) = \hat f'(y) = 0$ for all $n$. 
    On the other hand, since $L_{n} \to L$, there is $n_1 \ge 1$ such that for all $n \ge n_1$ 
    we have $L_{n} \in (L - \frac{\si}{4},L + \frac{\si}{4})$ and hence
    $\hat f_n'(y) = \hat f'(y) = 0$ for all $y> L +\tfrac{\si}{4}$. 
    Thus, for $n \ge n_1$, the left-hand side of \eqref{eq:ats} equals
    \[
        |\{y \in (0,L + \tfrac{\si}{4}) : \|\hat f_n'(y) - \hat f'(y)\|_2 \ge \rh\}|
    \]
    so that, in order to establish \eqref{eq:ats}, it suffices to show
    \begin{equation} \label{eq:ats2}
        |\{y \in (0,L - \tfrac{\si}{4}) : \|\al_n(t_{n}(y)) - \al(t(y))\|_2 \ge \rh\}| \le \frac{\si}{2},
    \end{equation}
    for all sufficiently large $n$.
    Note that, for all $n \ge n_1$, the restriction of $t_{n}$ to $[0,L-\tfrac{\si}{4}]$ is invertible 
    with inverse $s_{n}|_{[a,t_{n}(L - \tfrac{\si}{4})]}$.

    Since $s_{n}' \to s'$ in $L^1((a,b))$ (see \Cref{l:rep}), the set $\{s_{n}' : n\ge 1\}\cup \{s'\}$ is uniformly integrable, i.e., 
    there exists $\ta>0$ such that, for all $n \ge 1$, 
    \begin{equation} \label{eq:aui}
        |s_{n}(E)| =  \int_E s_{n}'(x)\, dx \le \frac{\si}{6} \quad \text{ and } \quad |s(E)|\le \frac{\si}{6}  \quad \text{ if } |E|\le \ta.
    \end{equation}
    
    By Lusin's theorem, there exists a compact $K \subseteq [a,b]$ with $|[a,b] \setminus K| \le \ta$  
    such that $\al|_K$ is continuous. By the Tietze--Dugundji theorem, there exists a continuous function $g : [a,b]\to \C^d$ 
    such that $g|_K = \al|_K$.

    For $y \in (0,L- \tfrac{\si}{4})$, consider
    \begin{align*}
        \al_n(t_{n}(y)) - \al(t(y)) &= A_n(y) + B_n(y) + C_n(y) + D(y),   
    \end{align*}
    where 
    \begin{align*}
        A_n(y) &:= \al_n(t_{n}(y)) - \al(t_{n}(y)),
        \\
        B_n(y) &:= \al(t_{n}(y)) - g(t_{n}(y)),
        \\
        C_n(y) &:= g(t_{n}(y)) - g(t(y)),
        \\
        D(y) &:= g(t(y)) - \al(t(y)).
    \end{align*}
    It follows that, if 
\begin{equation}
    \|A_n(y)\|_2 < \tfrac{\rh}{2},\quad  B_n(y) = D(y)=0,\quad \|C_n(y)\|_2 < \tfrac{\rh}{2},
\end{equation}
then 
    \begin{equation}
         \|\al_n(t_{n}(y)) - \al(t(y)) \|_2 < \rh.
    \end{equation}
    
    We have $\{y \in (0,L-\tfrac{\si}{4}) : \|A_n(y)\|_2 \ge \tfrac{\rh}{2}\} = s_{n}(F_n)$, where 
    \[
        F_n := \{x \in (a,t_{n}(L-\tfrac{\si}{4})) : \|\al_n(x) - \al(x)\|_2 \ge \tfrac{\rh}{2}\}.
    \]
    Since $\al_n \to \al$ in $L^1((a,b),\R^N)$ and
    \begin{equation}
        \int_a^b \|\al_n(x) - \al(x)\|_2 \, dx \ge \int_{F_n} \|\al_n(x) - \al(x)\|_2 \, dx \ge \tfrac{\rh}{2} \, |F_n|,
    \end{equation}
    we have $|F_n| \le \ta$ for all sufficiently large $n$, so that by \eqref{eq:aui}, 
    \begin{equation}
        |\{y \in (0,L-\tfrac{\si}{4}) : \|A_n(y)\|_2 \ge \tfrac{\rh}{2}\}| = |s_{n}(F_n)| 
        \le \frac{\si}{6}.
    \end{equation}

    Next we observe that $B_n(y) = 0$ if $y \in s_{n}(K)$ and that $|s_{n}([a,b]\setminus K)| \le \tfrac{\si}{6}$, 
    by \eqref{eq:aui}.
    Similarly, $D(y) = 0$ if $y \in s(K)$ and $|s([a,b]\setminus K)| \le \tfrac{\si}{6}$.

    Finally, $\|C_n(y)\|_2 < \tfrac{\rh}{2}$ uniformly for all $y$, if $n$ is sufficiently large, in view of \Cref{l:rep}.

    We conclude that, there is $n_2\ge n_1$ such that for all $n \ge n_2$,
    \begin{align*}
        |\{y \in (0,L- \tfrac{\si}{4}) : \|\al_n(t_{n}(y)) - \al(t(y)) \|_2 \ge \rh \}| 
        \le \frac{\si}{6} + \frac{\si}{6} +\frac{\si}{6}=\frac{\si}{2}
    \end{align*}
    which proves \eqref{eq:ats2}.
\end{proof}

The following lemma follows by the same arguments.

\begin{lemma} \label[l]{l:tim}
   $t_n' \to t'$ in measure on $\R$. 
\end{lemma}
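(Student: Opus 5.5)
We mimic the proof of \Cref{l:cim}, replacing the vector-valued functions $\al_n,\al$ by the scalar functions
\[
    \beta_n := \frac{1}{\ep + \|f_n'\|_2}, \qquad \beta := \frac{1}{\ep + \|f'\|_2}.
\]
By \eqref{eq:tos}, which applies here because the metric slope of $f$ is $\|f'\|_2$ almost everywhere, we have $t' = \beta \o t$ almost everywhere in $(0,L)$ and $t' = 0$ on $\R\setminus[0,L]$. Likewise $t_n' = \beta_n \o t_n$ almost everywhere in $(0,L_n)$ and $t_n' = 0$ outside $[0,L_n]$.

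The first step is to show that $\beta_n \to \beta$ in $L^1((a,b))$. Pass to a subsequence along which $f_n' \to f'$ almost everywhere. Then $\beta_n \to \beta$ almost everywhere, and $0 < \beta_n \le \frac1\ep$. Dominated convergence on the bounded interval gives $L^1$ convergence along the subsequence. Since every subsequence has a further subsequence with the same limit, the full sequence converges.

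Next we fix $\rh,\si>0$ and argue exactly as before.
\begin{itemize}
    \item The region $y<0$ contributes nothing. By \Cref{l:rep}, $L_n \to L$, so for large $n$ the region $y > L+\frac\si4$ also contributes nothing, and the strip $[L-\frac\si4, L+\frac\si4]$ has measure at most $\frac{\si}{2}$.
    \item On $(0,L-\frac\si4)$, the map $t_n$ is inverted by $s_n$ for large $n$. The functions $s_n'$ converge in $L^1$ by \Cref{l:rep}, so they are uniformly integrable. This yields $\ta>0$ such that $|s_n(E)|, |s(E)| \le \frac\si6$ whenever $|E|\le\ta$.
    \item By Lusin's theorem and Tietze's theorem, there is a continuous $g$ on $[a,b]$ that agrees with $\beta$ on a compact set $K$ with $|[a,b]\setminus K|\le\ta$.
\end{itemize}
We then split
\[
    \beta_n\o t_n - \beta \o t = (\beta_n-\beta)\o t_n + (\beta-g)\o t_n + (g\o t_n - g\o t) + (g-\beta)\o t .
\]
We bound the exceptional set of each term in turn.
\begin{itemize}
    \item First term: the set where it is at least $\frac\rh2$ is $s_n(F_n)$ with $|F_n| \le \frac{2}{\rh}\|\beta_n-\beta\|_{L^1}\le \ta$ for large $n$, so its measure is at most $\frac\si6$.
    \item Second term: it vanishes off $s_n([a,b]\setminus K)$, which has measure at most $\frac\si6$.
    \item Third term: it is less than $\frac\rh2$ everywhere for large $n$, by the uniform continuity of $g$ and the uniform convergence $t_n\to t$ from \Cref{l:rep}.
    \item Fourth term: it vanishes off $s([a,b]\setminus K)$, which has measure at most $\frac\si6$.
\end{itemize}
Summing these bounds shows that the set where $|t_n'-t'|\ge\rh$ has measure at most $\si$ for large $n$.

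The only point needing care is justifying the chain-rule identities for $t$ and $t_n$, and using the substitution $y=s_n(x)$ to convert measures in $y$ into integrals of $s_n'$ over sets of $x$. Both were already handled in \Cref{ssec:Lr} and in the proof of \Cref{l:cim}, so no new obstacle arises. As a consequence, since $|t_n'|\le\frac1\ep$, Vitali's theorem upgrades this convergence in measure to $t_n'\to t'$ in $L^q_{\loc}(\R)$ for all $q \in [1,\infty)$.
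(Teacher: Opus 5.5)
Your proof is correct and takes essentially the paper's approach: both rerun the Lusin/uniform-integrability decomposition from the proof of \Cref{l:cim} on $(0,L-\frac{\si}{4})$, after discarding $y<0$, $y>L+\frac{\si}{4}$ and the strip of length $\frac{\si}{2}$. The only difference is that the paper feeds that argument $\al_n := s_n'$, $\al := s'$ directly (their $L^1$ convergence is \Cref{l:rep}(2)) and passes to $t_n' = 1/(s_n'\o t_n)$ via $|t_n'-t'| \le \ep^{-2}|s_n'\o t_n - s'\o t|$. That spares your subsequence/dominated-convergence step; indeed $|\beta_n-\beta|\le\ep^{-2}|s_n'-s'|$ already gives $\beta_n\to\beta$ in $L^1$. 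It also makes the lemma apply verbatim in the metric setting of \Cref{t:ms}, where only $|f_n'|\to|f'|$ in $L^1$ is assumed.
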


\begin{proof}
    We have  
    \[
        t' = \frac{1}{s' \o t}
    \]
    almost everywhere in $(0,L)$ and $t' = 0$ in $\R \setminus [0,L]$.
    Similarly,
    \[
        t_n' = \frac{1}{s_n' \o t_n}
    \]
    almost everywhere in $(0,L_n)$ and $t' = 0$ in $\R \setminus [0,L_n]$.

    Let $\rh,\si>0$ be fixed.
    We have to show that there exists $n_0 \ge 1$ such that 
    \begin{equation} \label{eq:ts}
        |\{y \in \R : |t_{n}'(y) - t'(y)| \ge \rh\}| \le \si, \quad n \ge n_0.
    \end{equation}
    As in the proof of \Cref{l:cim}, we see that it suffices to show \eqref{eq:ts} for $y \in (0,L-\frac{\si}{4})$.
    
    Since $s',s_{n}'\ge \ep$ almost everywhere in $(a,b)$,  
    \begin{equation} \label{eq:li}
        |t_{n}' - t'| = \Big| \frac{1}{s_{n}'\o t_n} - \frac{1}{s' \o t} \Big| 
        \le \ep^{-2} | s_n'\o t_n - s'\o t |
    \end{equation}
    almost everywhere in $(0,L-\frac{\si}{4})$, for sufficiently large $n$.

    In view of \eqref{eq:li},
    setting $\al_n := s_n'$ and $\al := s'$, the assertion of the lemma follows from \eqref{eq:ats2}.
\end{proof}

\Cref{l:tim} implies $t_n' \to t'$ in $L^q_{\loc}(\R)$ for all $q \in [1,\infty)$, because $|t_n'| \le \frac{1}{\ep}$ almost everywhere in $\R$.
Thus (3) is proved.

This ends the proof of \Cref{p:Qvt}.

%-----------------------------------------------------------------------------------------------------------------------
\subsection{Metric slope convergence and reparameterization}
%-----------------------------------------------------------------------------------------------------------------------

The proof of \Cref{t:Qvt} yields the following variant for absolutely continuous curves in an 
arbitrary complete metric space $(X,d)$.

\begin{theorem} \label[t]{t:ms}
    Let $f,f_n \in AC^1((a,b),X)$ and $|f_n'| \to |f'|$ in $L^1((a,b))$.
    Fix $\ep>0$ and
    let $\hat f := f \o t$ and $\hat f_n := f_n \o t_n$, where $t := t_{\ep,f}$ and $t_n := t_{\ep,f_n}$.
    Then $t_n \to t$ in $W^{1,q}_{\loc}(\R)$ and $|\hat f_n'| \to |\hat f'|$ in $L^q_{\loc}(\R)$, 
    for all $q \in[1,\infty)$.
\end{theorem}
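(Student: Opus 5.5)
Following the proof of \Cref{t:Qvt} (more precisely, of \Cref{p:Qvt}), we replace the vector-valued derivatives $f_n'$, $f'$ by the scalar metric slopes $|f_n'|$, $|f'|$; no vector structure on $X$ is needed. The proof has three steps.

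\textbf{Step 1: uniform convergence and the derivative of $t$.} By \Cref{l:rep}, the hypothesis $|f_n'| \to |f'|$ in $L^1((a,b))$ gives
\begin{itemize}
\item $s_n \to s$ uniformly on $[a,b]$,
\item $s_n' \to s'$ in $L^1((a,b))$,
\item $t_n \to t$ uniformly on $\R$.
\end{itemize}
The proof of \Cref{l:tim} uses only these facts together with $s', s_n' \ge \ep$. Indeed, it rests on the Lusin-type argument of \Cref{l:cim}, applied with $\al_n := s_n'$ and $\al := s'$, plus the bound $|1/u - 1/v| \le \ep^{-2}|u-v|$. So it applies verbatim and gives $t_n' \to t'$ in measure on $\R$. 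Since $|t_n'| \le \frac1\ep$, Vitali's theorem yields $t_n' \to t'$ in $L^q_{\loc}(\R)$ for every $q$. Combined with uniform convergence, this gives $t_n \to t$ in $W^{1,q}_{\loc}(\R)$.

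\textbf{Step 2: the metric slope of $\hat f$.} By \eqref{eq:nlp}, $|\hat f'| \o s = \frac{|f'|}{\ep + |f'|}$ almost everywhere in $(a,b)$. To pass to a formula in the variable $y$, I need that $s$ maps null sets to null sets and that $t$ maps the null set where \eqref{eq:nlp} fails to a null set. Both hold because $s$ is absolutely continuous and $t$ is Lipschitz. Hence
\[
|\hat f'| = \beta \o t \quad \text{a.e. in } (0,L), \qquad \beta := \frac{|f'|}{\ep + |f'|},
\]
and $|\hat f'| = 0$ outside $[0,L]$, since $\hat f$ is constant there. The same holds for $|\hat f_n'|$ with $\beta_n := \frac{|f_n'|}{\ep + |f_n'|}$ and $t_n$, on $(0,L_n)$.

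\textbf{Step 3: convergence of the slopes.} The map $u \mapsto \frac{u}{\ep+u}$ is $\frac1\ep$-Lipschitz on $[0,\infty)$. Therefore $\beta_n \to \beta$ in $L^1((a,b))$ directly, with no subsequence argument needed. The proof of \Cref{l:cim} now applies word for word with $\al_n := \beta_n$ and $\al := \beta$, now scalar-valued. Its ingredients are:
\begin{itemize}
\item reduction to $y \in (0, L - \tfrac{\si}{4})$ using $L_n \to L$;
\item uniform integrability of $\{s_n'\}$;
\item Lusin's theorem and Tietze extension of $\beta|_K$;
\item uniform convergence $t_n \to t$;
\item the four-term splitting $A_n + B_n + C_n + D$.
\end{itemize}
This yields $|\hat f_n'| \to |\hat f'|$ in measure on $\R$. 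Since $|\hat f_n'| \le 1$, Vitali's theorem upgrades this to convergence in $L^q_{\loc}(\R)$ for all $q \in [1,\infty)$.

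The main obstacle is Step 2: justifying the pointwise change of variables for metric slopes. Specifically, one must check that the exceptional null sets transfer correctly under $t$ and $s$. One also needs that $\hat f$ is constant off $[0,L]$ and that the slopes vanish there. Once this is in place, everything else is a direct transcription of the earlier proofs.
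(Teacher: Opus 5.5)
Your proposal is correct and follows the paper's proof essentially step for step: $t_n\to t$ in $W^{1,q}_{\loc}(\R)$ via \Cref{l:rep} and \Cref{l:tim}, then the identity $|\hat f'|=\frac{|f'|}{\ep+|f'|}\circ t$ a.e.\ in $(0,L)$ via the Lusin $N$-property, and finally the argument of \Cref{l:cim} together with the bound $|\hat f_n'|\le 1$. The only deviation is a harmless simplification: you obtain $\frac{|f_n'|}{\ep+|f_n'|}\to\frac{|f'|}{\ep+|f'|}$ in $L^1((a,b))$ directly from the $\frac1\ep$-Lipschitz continuity of $u\mapsto\frac{u}{\ep+u}$, rather than by passing to a subsequence and using generalized dominated convergence as in \eqref{eq:l1c}.
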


\begin{proof} 
    We use the notation of the previous section. 
    \Cref{l:rep,l:tim} imply $t_n \to t$ in $W^{1,q}_{\loc}(\R)$, for all $q \in [1,\infty)$, as above. 
    By \eqref{eq:nlp}, 
    \[
        |\hat f'| \o s = \frac{|f'|}{\ep + |f'|} \quad \text{ almost everywhere in } (a,b).
    \]
    Consequently (note that $s$ and $t$ have the Lusin $N$-property),  
    \[
        |\hat f'|  = \frac{|f'|}{\ep + |f'|} \o t \quad \text{ almost everywhere in } (0,L).
    \]
    Since $\hat f$ is locally constant on $\R \setminus [0,L]$, we have $|\hat f'|(y)=0$ for 
    $y \in \R \setminus [0,L]$. 
    Analogously,
    \[
        |\hat f_n'|  = \frac{|f_n'|}{\ep + |f_n'|} \o t \quad \text{ almost everywhere in } (0,L_n),
    \]
    and
    $|\hat f_n'|(y)=0$ for $y \in \R \setminus [0,L_n]$.

    Similarly as \eqref{eq:l1c}, we find that
    \begin{equation}
        \frac{|f_n'|}{\ep + |f_n'|} \to \frac{|f'|}{\ep + |f'|} \quad \text{ in } L^1((a,b)).
    \end{equation}
    The proof of \Cref{l:cim} (with $\al_n=\frac{|f_n'|}{\ep + |f_n'|}$ and $\al =\frac{|f'|}{\ep + |f'|}$) 
    gives that $|\hat f_n'| \to |\hat f'|$ in measure on $\R$. 
    Consequently, $|\hat f_n'| \to |\hat f'|$ in $L^q_{\loc}(\R)$, 
    for all $q \in[1,\infty)$, because $|\hat f_n'| \le 1$ almost everywhere in $\R$.
\end{proof}

%-----------------------------------------------------------------------------------------------------------------------
\section{Proof of \texorpdfstring{\Cref{t:main1} and \Cref{t:seq}}{Theorem 1.2 and Theorem 1.3}} \label{sec:proofs}
%-----------------------------------------------------------------------------------------------------------------------

Let $d \in \N_{\ge 1}$, $\ep>0$, and $I \subseteq \R$ a bounded open interval.
For $a \in C^{d-1,1}(\ol I,\C^d)$ consider $\sol(a) = \La_a$ we belongs to $W^{1,q}(I,\cA_d(\C))$, for all $q \in [1,\frac{d}{d-1})$; see \Cref{t:know}. 
Fix an Almgren embedding $\De : \cA_d(\C) \to \R^N$.
We define (cf.\ \Cref{ssec:Lr}) 
\[
    t_a := t_{\ep,\De \o \La_a} : \R \to \ol I
\]
and thus obtain the reparameterization map $t : C^{d-1,1}(\ol I,\C^d) \to \Lip(\R,\ol I)$.
As seen in  \Cref{ssec:Lr}, $t$ has the properties (1)--(3) listed in \Cref{t:main1}.
The continuity of the map \eqref{eq:rs}, for all $q \in [1,\infty)$, is a consequence of \Cref{t:seq}.

Let us prove \Cref{t:seq}.
Let $a_n \to a$ in $C^d(\ol I,\C^d)$. Then $\La_{a_n} \to \La_a$ uniformly on $\ol I$, by \cite[Corollary 6.5]{Parusinski:2024ab}, 
and in $W^{1,q}(I,\cA_d(\C))$ for all $q \in [1,\frac{d}{d-1})$, by \Cref{t:know}. 
By \Cref{l:rep}, $t_{a_n} \to t_a$ and hence also $\La_{a_n} \o t_{a_n} \to \La_a \o t_a$ uniformly on $\R$.
\Cref{t:Qvt} implies that $t_{a_n} \to t_a$ in $W^{1,q}_{\loc}(\R)$ and $\La_{a_n} \o t_{a_n} \to \La_{a} \o t_{a}$ in $W^{1,q}_{\loc}(\R,\cA_d(\C))$, 
for all $q \in [1,\infty)$.

To prove (3) of \Cref{t:seq} note that, in the respective setup, 
$\la_n \to \la$ in $W^{1,q}(I,\C^d)$ for all $q \in [1,\frac{d}{d-1})$, by \cite[Theorem 1.6]{Parusinski:2024ab}.
Consequently, the statement follows from \Cref{p:Qvt}. Here $t_n := t_{\ep,\la_n}$ and $t := t_{\ep,\la}$.

The proofs of \Cref{t:main1} and \Cref{t:seq} are complete.

%---------------------------------------------------------------------------------------------
\subsection*{Acknowledgement}
%---------------------------------------------------------------------------------------------

This research was funded in whole or in part by the Austrian Science Fund (FWF) DOI 10.55776/PAT1381823.
For open access purposes, the authors have applied a CC BY public copyright license to any author-accepted 
manuscript version arising from this submission.

The authors acknowledge the use of Claude Sonnet 5 for suggesting an argument that led to the proof 
of a precursor of \Cref{l:cim}.
The authors verified this argument and adapted it to the present setting, and take 
full responsibility for the content of the paper.

%\bibliography{../../references/biblio}
%\bibliographystyle{amsalpha}

\def\cprime{$'$}
\providecommand{\bysame}{\leavevmode\hbox to3em{\hrulefill}\thinspace}
\providecommand{\MR}{\relax\ifhmode\unskip\space\fi MR }
% \MRhref is called by the amsart/book/proc definition of \MR.
\providecommand{\MRhref}[2]{%
  \href{http://www.ams.org/mathscinet-getitem?mr=#1}{#2}
}
\providecommand{\href}[2]{#2}

\end{document}